\documentclass[11pt]{amsart}




\usepackage{dsfont}
\usepackage{amssymb,amsthm,amsmath}

\usepackage{xcolor}
\usepackage[colorlinks, pdftex]{hyperref}



\theoremstyle{plain}
\newtheorem{prop}{Proposition}[section]
\newtheorem{lem}[prop]{Lemma}

\newtheorem{thm}[prop]{Theorem}

\newtheorem{cor}[prop]{Corollary}

\theoremstyle{definition}

\theoremstyle{remark}
\newtheorem{remark}[prop]{Remark}

\numberwithin{equation}{section}

\setlength{\parindent}{0pt}
\setlength{\parskip}{1ex}


\DeclareMathOperator{\Bow}{Bow}


\DeclareMathOperator{\SL}{SL}

\DeclareMathOperator{\SO}{SO}

\DeclareMathOperator{\diag}{diag}








\newcommand\hs{homogeneous space}


\newif\ifdraft\drafttrue

\draftfalse

\newcommand\eq[2]{{\ifdraft{\ \tt [#1]}\else\ignorespaces\fi}\begin{equation}\label{eq:#1}{#2}\end{equation}}

\newcommand {\equ}[1]     {\eqref{eq:#1}}

\newcommand\nz{\smallsetminus \{0\}}
\newcommand\Mtilde{T}

\newcommand{\ggm}{G/\Gamma}

\newcommand\hd{Hausdorff dimension}
\newcommand\ssm{\smallsetminus}

\newcommand\N{\mathbb{N}}
\newcommand\Q{\mathbb{Q}}
\newcommand\R{\mathbb{R}}
\newcommand\Z{\mathbb{Z}}







\begin{document}
\date{\today}
\title[Singular systems and non-escape of mass]{Singular systems of linear forms and non-escape of mass in the space of lattices}

\author{S.\@ Kadyrov}
\author{D.\@ Kleinbock}
\author{E.\@ Lindenstrauss}
\author{G.~A.\@ Margulis}
\thanks{
We gratefully acknowledge support 
by the National Science Foundation through grants DMS-1101320 (D.K.) and 
DMS-1265695 (G.M.), by
the EPSRC through grant EP/H000091/1 (S.K.), by the European Research Council through AdG Grant 267259 (E.L.), and by the ISF grant 983/09 (E.L.). D.K. and E.L. stay at MSRI was supported in part by NSF grant no. 0932078 000.}

\address[SK]{Mathematics Department,
Nazarbayev University,
Astana, Kazakhstan}
\email{shirali.kadyrov@nu.edu.kz}

\address[DK]{Department of Mathematics, Brandeis University, Waltham MA} 
\email{kleinboc@brandeis.edu}

\address[EL]{Einstein Institute of Mathematics, The Hebrew University of
Jerusalem, Jerusalem, Israel}
\email{elon@math.huji.ac.il}

\address[GM]{Department of Mathematics, Yale University, New Haven CT}
\email{grigorii.margulis@yale.edu}

\keywords{Hausdorff dimension, entropy, the space of lattices, singular systems of linear forms, Diophantine approximation}
\subjclass[2010]{37A17, 11K60, 37A35, 11J13, 28D20}

\begin{abstract}

{Singular systems of linear forms were introduced by Khintchine in the 1920s, and it was shown by Dani in the 1980s that they are in one-to-one correspondence with certain divergent orbits of one-parameter diagonal groups on the space of lattices. We give a (conjecturally sharp) upper  bound on the \hd\ of the set of singular systems of linear forms (equivalently the set of lattices with divergent trajectories) as well as the dimension of the set of lattices with trajectories `escaping on average' (a notion weaker than divergence). This extends work by Cheung, as well as by Chevallier and Cheung. Our method differs considerably from that of Cheung and Chevallier, and is based on the technique of integral inequalities developed by  Eskin, Margulis and Mozes.}

\end{abstract}

\maketitle

\section{Introduction}

For any given $m,n \in \N$ we consider the space of unimodular $(m+n)$-lattices $X_{m+n} :=  \ggm$ where $G = \SL(m+n,\mathbb R)$ and $\Gamma =  \SL(m+n,\mathbb Z)$, and  the one-parameter diagonal semigroup $\{g_t\}_{t \ge 0}$, where 
\eq{defgt}{ g_t := \diag(e^{nt},...,e^{nt},e^{-mt},...,e^{-mt}).} 
We consider the action $g_t: X_{m+n} \to X_{m+n}$ by left translations, $g_t\cdot x=g_t x$.
The unstable horospherical subgroup  $U$ with respect to $g_1$ can be identified with the space $M_{m,n}$ of $m\times n$ real matrices:  \eq{uhs}{U = \{u_s : s \in M_{m,n}\}\text{ where  }u_s := \begin{pmatrix}I_m & s\\ 0 & I_n\end{pmatrix}\,.} 
The one-parameter diagonal semigroup $\{g_t\}_{t\geq 0}$ and the corresponding horospherical subgroup $U<G$ are closely connected to the diophantine properties of $m\times n$ real matrices.
One says that an $m \times n$ real matrix $s$ is a  {\sl singular system of $m$ linear forms in $n$ variables\/}  if for any $\varepsilon>0$ there exists $T_0 \in \N$ such that for any $T>T_0$ there exist ${\bf q} \in \Z^n$ and ${\bf p} \in \Z^m$ such that\footnote{Note that this definition is independent of the choice of norms on $\R^m$ and $\R^n$; later it will be convenient to work with the Euclidean norms. }
\eq{sing}{\|s {\bf q} -{\bf p}\| < \frac{\varepsilon}{T^{n/m}} \text{ and } 0<\|{\bf q}\|<T. }

Equivalently one can restrict  $T$ to be a power of a fixed natural number, e.g.\ only consider $T = 2^\ell$, $\ell\in \N$. This property was introduced by A.~Khintchine \cite{Kh} who showed that the set of such matrices has Lebesgue measure zero, hence the name `singular'. 
Later it was shown by S.G.\ Dani \cite{Dani}  that $s$ is  singular  if and only if the trajectory  $\{g_tu_s \Gamma: t \ge 0\}$
 is  divergent in $X_{m+n}$, that is, leaves every compact subset of $X_{m+n}$. Thus the zero measure of the set of singular systems  follows from the ergodicity of the $g_t$-action on $X_{m+n}$.

When $m = n = 1$, it is easy to see that each divergent trajectory $\{g_tx\}$, where $x\in X_2$ is a unimodular lattice in $\R^2$, is, in Dani's terminology, {\sl degenerate\/}, that is, there exists a subgroup of $x$ contracted by the action. This is a manifestation of the fact that the group $\SL(2,\mathbb R)$ has $\Q$-rank $1$, cf. \cite[Theorem~6.1]{Dani}. In particular, it follows that the set of singular real numbers (equivalently, $1\times 1$ matrices) coincides with $\Q$ and thus has \hd\ zero\footnote{This can also be easily shown using continued fractions.}.
However when $\max(m,n) > 1$ (that is, when $G$ has real rank bigger than $1$) it is possible to construct trajectories which diverge for non-trivial reasons. This was first observed by Khintchine and then generalized by Dani \cite[Theorem~7.3]{Dani}. 
Thus the set of points with divergent orbits has a quite complicated structure. In particular, computing its \hd\ is a difficult problem. When $m+n = 3$ it  was shown by Y.\ Cheung  \cite{Che11}  that the Hausdorff dimension of the set of singular pairs is $4/3$, hence the set of points in   $X_{3}$ with divergent $g_t$-orbits has \hd\ $22/3$, that is, codimension $2/3$. Also, recently in \cite{CC14} Cheung and N.\ Chevallier showed that the set of singular $m$-vectors has Hausdorff dimension $\frac{m^2}{m+1}$, which corresponds to codimension $\frac m{m+1}$ of the set of divergent trajectories in  $X_{m+1}$. 

Now let us say that a point $x \in X_{m+n}$ {\sl escapes on average} (with respect to the semigroup $g_t$ as in \equ{defgt} which we shall fix for the duration of the paper) if 
$$\lim_{N\to \infty} 
\frac{1}{N}\big|\big\{\ell \in \{1,\dots,N\}:g_\ell x \in Q\big\}\big|=0$$
for any compact set $Q$ in $X_{m+n}$. Observe that this notion is independent on the parametrization of the orbit, in other words, $x$ escapes on average if and only if 
$$\lim_{N\to \infty} 
\frac{1}{N}\big|\big\{\ell \in \{1,\dots,N\}:g_{a\ell} x \in Q\big\}\big|=0$$
for any $a > 0$ (or else one can replace summation by integration). 
In this paper we prove
\begin{thm}\label{thm:escape} For any $x\in X_{m+n}$, the set 
\eq{esconav}{
\{u\in U : ux \text{ escapes on average}\}} has Hausdorff dimension at most $mn- \frac{mn}{m+n}$.
Consequently, 
 \eq{on average}{\dim\left(\{x\in X_{m+n}: x \text{ escapes on average}\}\right) \le \dim (X_{m+n}) - \frac{mn}{m+n}\,.}  
\end{thm}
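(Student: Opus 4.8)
The plan is to reduce the displayed bound \equ{on average} to the dimension estimate on a single unstable leaf contained in the first part of the theorem, and then to prove that estimate by combining an Eskin--Margulis--Mozes-type integral inequality for a height function with a multiscale covering argument. For the reduction, write $E=\{x\in X_{m+n}:x\text{ escapes on average}\}$. The key observation is that $E$ is invariant under the opposite parabolic $P^-:=LU^-$, where $U^-$ is the contracting horospherical subgroup of $g_1$ (the transpose of $U$) and $L=\{\diag(h_1,h_2):h_1\in\GL(m,\R),\ h_2\in\GL(n,\R),\ \det h_1\det h_2=1\}$ is the centraliser of $\{g_t\}$: since $g_t$ commutes with $L$ and conjugates $U^-$ into a shrinking neighbourhood of the identity, for $p\in P^-$ the orbits $(g_\ell x)_\ell$ and $(g_\ell px)_\ell$ stay a bounded distance apart for all large $\ell$ and so meet any compact set along the same density-zero set of times. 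Since the Lie algebras of $P^-$ and $U$ together span that of $G$, near any point $x_0$ there is a chart $(q,u)\mapsto qu\cdot x_0$ in which the $P^-$-invariant set $E$ is a product of an open piece of $P^-$ with a relatively open subset of $\{u\in U:ux_0\in E\}$; since multiplying a set by a manifold adds Hausdorff dimension, $\dim E\le\dim P^-+\sup_{x_0}\dim\{u\in U:ux_0\in E\}$. As $\dim U=mn$ and $\dim P^-=\dim X_{m+n}-mn$, feeding the first part of the theorem into this yields \equ{on average}; thus it suffices to bound $\dim\{u\in U:ux\in E\}$ by $mn-\tfrac{mn}{m+n}$ for each fixed $x$.

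For the integral inequality, following Eskin--Margulis--Mozes I would build a proper function $f\colon X_{m+n}\to[1,\infty)$ out of the quantities $\alpha_k(\Lambda):=\sup\{d(\Delta)^{-1}:\Delta<\Lambda\ \text{primitive of rank }k\}$, $1\le k\le m+n-1$, where $d(\Delta)$ is the covolume of $\Delta$ inside $\Span_\R\Delta$; $f$ is proper by Mahler's criterion, and the goal is to show it contracts on average, i.e.\ for suitable $a\in(0,1)$, $b>0$, $t_0>0$,
\[
\frac{1}{\vol(B_1)}\int_{B_1}f(g_tu_sx)\,ds\ \le\ a^{\,t}f(x)+b\qquad(t\ge t_0,\ x\in X_{m+n}),
\]
where $B_r\subset M_{m,n}$ denotes the $r$-ball. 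The crucial --- and first genuinely hard --- point is to push the exponent(s) in $f$ (a single power, or rank-dependent powers, of the $\alpha_k$) as far as the contraction $a<1$ permits; the threshold exponent is exactly what forces the codimension $\tfrac{mn}{m+n}$ at the end. For rank one this is classical, but when $\max(m,n)>1$ all ranks $k$ have to be controlled simultaneously, and it is precisely the trajectories that diverge ``for non-degenerate reasons'' (Khintchine's examples; cf.\ \cite[Theorem~7.3]{Dani}) that would defeat a careless choice of $f$.

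For the passage from recurrence to dimension, fix $x$, suppose for contradiction that $\dim\{u\in U:ux\in E\}>mn-\tfrac{mn}{m+n}$, and use Frostman's lemma to produce a compactly supported probability measure $\nu$ on $\{s\in B_1:u_sx\in E\}$ with $\nu(B(s,r))\le r^{d'}$ for some $d'>mn-\tfrac{mn}{m+n}$. I would analyse the scales $r_k=e^{-(m+n)k\ell_0}$ for a fixed large $\ell_0$: on a radius-$r_k$ ball the map $g_{k\ell_0}$ is a homothety onto a unit ball (it expands $U$ with Jacobian $e^{mn(m+n)\ell_0}$ per step), and a localised iterate of the integral inequality shows that at each level only a proportion $\lesssim 1/R$ of the radius-$r_k$ boxes $B$ have $f(g_{k\ell_0}u_{c_B}x)>R$, $c_B$ being the centre of $B$. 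On the other hand escape on average forces $\tfrac1K\sum_{k<K}f(g_{k\ell_0}u_sx)\to\infty$ for $\nu$-a.e.\ $s$ and every $R$, so by Egorov's theorem $\nu$-most $s$ lie in ``$R$-bad'' boxes for a fraction of the levels $k<K$ tending to $1$. Counting the radius-$r_K$ boxes that are bad at almost every level --- organising the count around the last level at which $f$ along the orbit stays below a fixed bound, rather than around a single deep excursion --- then shows this family covers $B_1$ too sparsely to support a Frostman measure of exponent $d'$, and letting $R\to\infty$ gives the contradiction.

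The two load-bearing steps --- and hence the main obstacles --- are establishing the integral inequality with the sharp exponent, which is the source of the precise constant $\tfrac{mn}{m+n}$, and making the multiscale count robust against the fact that escape on average controls only a density-one set of times, which is what forces the counting to be telescoped over return times to a fixed compact set rather than read off a single excursion into the cusp.
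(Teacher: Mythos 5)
Your overall architecture is the same as the paper's: an Eskin--Margulis--Mozes height function built from the $\alpha_k$'s with a contraction-on-average inequality, a multiscale covering count on the unstable leaf, and the reduction of \equ{on average} to the leafwise bound via invariance of the escaping set under the weak-stable parabolic (your $P^-=LU^-$ is exactly the paper's $P^0$ of \eqref{eqn:weaks}, with the same argument that $\bigcup_{t>0}g_tgg_{-t}$ stays bounded). But there are two genuine gaps. First, the constant $\frac{mn}{m+n}$ does not come from ``some $a<1$'' in the inequality $\frac{1}{\vol(B_1)}\int_{B_1}f(g_tu_sx)\,ds\le a^tf(x)+b$; it comes from the precise contraction rate $\asymp t\,e^{-mnt}$, which requires the specific rank-dependent exponents $\beta_i=m/i$ for $i\le m$ and $\beta_i=n/(m+n-i)$ for $i>m$ (Proposition~\ref{prop:gaussian}) and the non-trivial combination of the different powers of the $\alpha_i$ into a single height function (Proposition~\ref{prop:Delta}, Corollary~\ref{cor:main}). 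You flag this as ``the first genuinely hard point'' but supply neither the exponents nor the rate, and with an unspecified $a$ the argument cannot produce the stated bound.

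Second, the counting step as you state it is miscalibrated. The per-level claim ``only a proportion $\lesssim 1/R$ of the radius-$r_k$ boxes have $f(g_{k\ell_0}u_{c_B}x)>R$'' is a one-step Chebyshev bound that is \emph{not} multiplicative across levels: badness at consecutive levels is strongly correlated, so you may not raise $1/R$ to the power of the number of bad levels; and if that were legitimate, letting $R\to\infty$ at fixed $\ell_0$ would force the codimension to grow without bound, contradicting the Cheung--Chevallier lower bound $mn-\frac{mn}{m+n}$ in the case $\min(m,n)=1$. The correct quantitative engine is conditional and runs along maximal stretches of consecutive excursions: the set of $u$ whose orbit stays above a fixed height $M$ for $N$ consecutive steps is covered by $\ll\frac{\tilde\alpha(x)}{M}(C_1t)^Ne^{(m+n-1)mntN}$ balls of radius $e^{-(m+n)tN}$ (the paper's Proposition~\ref{prop:iterates} and Corollary~\ref{cor:coverN}, obtained by iterating the integral inequality \emph{restricted} to the set where the orbit has stayed high, with a Gaussian convolution trick to linearize the iterated translates), i.e.\ a saving of a factor $\asymp te^{-mnt}$ per bad step against the expansion $e^{(m+n)mnt}$ per step --- this ratio is exactly where $\frac{mn}{m+n}$ comes from. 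One must then splice these excursion bounds together by induction over the decomposition of $\{1,\dots,N\}$ into maximal bad intervals $J_i$ and good intervals $I_j$, restarting the height control at each return to the compact set (this is the proof of Theorem~\ref{thm:main}). Your closing remark about ``telescoping over return times'' points at the right combinatorics, but the mechanism you actually write down (fixed threshold $R$, per-level proportion $1/R$, Frostman/Egorov contradiction, $R\to\infty$) does not yield the theorem and in its literal form proves too much.
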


One can also consider a corresponding concept in Diophantine approximation, weakening the classical notion of singularity: say that  an $m \times n$ real matrix~$s$ is   {\sl singular on average\/}  if for any $\varepsilon>0$  one has
$$
\lim_{N{\to \infty}} 
\frac{1}{N}\left|\left\{\ell \in \{1,\dots,N\}:
\begin{aligned} \exists\, {\bf q} \in \mathbb Z^n \text{ and } {\bf p} \in \mathbb Z^m \text{ such that } \\\equ{sing} \text{ holds for } T = 2^\ell \qquad\end{aligned}\right\}\right|=1\,.
$$

Using Dani's correspondence, as an immediate corollary of Theorem~\ref{thm:escape} we obtain the following

\begin{cor}\label{cor:dim} The Hausdorff dimension of the set of $s\in M_{m,n}$ which are singular 
on average (and hence of the set of singular $s\in M_{m,n}$) is at most
$mn- \frac{mn}{m+n}$.
\end{cor}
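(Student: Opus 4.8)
The plan is to derive Corollary~\ref{cor:dim} from Theorem~\ref{thm:escape} by putting Dani's correspondence in quantitative form and applying the theorem at the base point $x=\Gamma$. First I would note that the map $M_{m,n}\to U$, $s\mapsto u_s$ from \equ{uhs}, is an affine-linear bijection, hence locally bi-Lipschitz, and therefore preserves \hd\ (which is countably stable and bi-Lipschitz invariant). So it is enough to prove the inclusion
\[
\{s\in M_{m,n}:\ s\text{ is singular on average}\}\ \subseteq\ \{u\in U:\ u\Gamma\text{ escapes on average}\}
\]
under the identification $s\leftrightarrow u_s$; the claimed bound then follows directly from \equ{esconav}. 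The parenthetical assertion about singular $s$ is immediate, since if \equ{sing} holds for all $T>T_0$ then it holds for all but finitely many $T=2^\ell$, so a singular $s$ is singular on average and inherits the same bound.

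For the dictionary I would use that a nonzero vector of the lattice $u_s\Z^{m+n}$ has the form $u_s\vvek{-{\bf p}}{{\bf q}}=\vvek{s{\bf q}-{\bf p}}{{\bf q}}$ with $({\bf p},{\bf q})\in\Z^m\times\Z^n$, and that
\[
g_t u_s\vvek{-{\bf p}}{{\bf q}}=\vvek{e^{nt}(s{\bf q}-{\bf p})}{e^{-mt}{\bf q}}.
\]
Setting $t_\ell:=\tfrac{\ln 2}{m}\,\ell$, so that $e^{nt_\ell}=2^{\ell n/m}$ and $e^{-mt_\ell}=2^{-\ell}$, the statement that \equ{sing} holds for $T=2^\ell$ with parameter $\varepsilon$ becomes exactly: some $({\bf p},{\bf q})$ with ${\bf q}\ne0$ makes the two blocks of $g_{t_\ell}u_s\vvek{-{\bf p}}{{\bf q}}$ have norms $<\varepsilon$ and $<1$. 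I would combine this with Mahler's compactness criterion in the form that the set $K_\delta$ of unimodular lattices with no nonzero vector of norm $<\delta$ is compact, and that every compact subset of $X_{m+n}$ is contained in $K_{c\delta}$ for suitable small $\delta>0$ (where $c$ denotes a dimensional constant absorbing the comparison between the norm on $\R^{m+n}$ and the norms on its two blocks).

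The key step is a ``transport'' estimate at the sampled times: if \equ{sing} holds for $T=2^\ell$ with parameter $\varepsilon$ and witness $({\bf p},{\bf q})$, then for every integer $k$ with $k-\ell$ lying in the interval $\big(\log_2(1/\delta),\ \tfrac{m}{n}\log_2(\delta/\varepsilon)\big)$ the displayed formula above shows that both blocks of $g_{t_k}u_s\vvek{-{\bf p}}{{\bf q}}$ have norm $<\delta$, so $g_{t_k}u_s\Gamma$ lies outside the compact set $K_{c\delta}$. Given $\delta>0$, one chooses $\varepsilon=\varepsilon(\delta,m,n)>0$ small enough that this interval has length at least $2$; then it always contains integers and the admissible shifts $k-\ell$ range over a fixed bounded set. (A complementary estimate --- that failure of \equ{sing} for $T=2^\ell$ with $\varepsilon\le1$ forces $g_{t_\ell}u_s\Gamma\in K_\varepsilon$ --- would give the reverse inclusion and hence equality of the two sets, but it is not needed for the upper bound.)

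To finish, fix a compact $Q\subseteq X_{m+n}$, choose $\delta>0$ with $Q\subseteq K_{c\delta}$, and take $\varepsilon=\varepsilon(\delta,m,n)$ as above. If $s$ is singular on average, then the proportion of $\ell\le N$ for which \equ{sing} holds with $T=2^\ell$ tends to $1$; by the transport estimate each such $\ell$ supplies, around itself, a window of bounded length of integers $k$ with $g_{t_k}u_s\Gamma\notin K_{c\delta}$, hence $g_{t_k}u_s\Gamma\notin Q$. Since these windows overlap boundedly, a routine double-counting argument shows that the proportion of $k\le N$ with $g_{t_k}u_s\Gamma\in Q$ tends to $0$. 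As $Q$ was arbitrary and the escape-on-average condition is insensitive to the linear time change $\ell\mapsto t_\ell$ (the remark following its definition), this says precisely that $u_s\Gamma$ escapes on average, which establishes the inclusion and hence the corollary. I expect the only real --- and still quite mild --- obstacle to be exactly the bookkeeping just indicated: a ``good time'' for the Diophantine inequality \equ{sing} need not be one of the sampled flow times $t_\ell$, so a witness vector must be transported across a short window of sampled times, and it is this that forces $\varepsilon$ to be chosen as a suitable power of $\delta$. Everything else --- Mahler's criterion, bi-Lipschitz invariance of \hd, and the implication that singular $\Rightarrow$ singular on average --- is standard.
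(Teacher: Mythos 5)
Your proof is correct and follows exactly the route the paper intends: the paper simply invokes ``Dani's correspondence'' to declare the corollary immediate from Theorem~\ref{thm:escape}, and your write-up supplies precisely the quantitative form of that correspondence (the identification of $T=2^\ell$ with a sampled flow time $t_\ell=\tfrac{\ln 2}{m}\ell$, the transport of a witness vector across a window of sampled times, Mahler's criterion, and the shift/double-counting step), together with the observation that $s\mapsto u_s$ is an isometry for $d_U$ and the parametrization-invariance of escape on average. The only stylistic simplification I'd note is that the ``bounded-overlap double counting'' can be replaced by picking a single fixed integer $j_0$ in your interval $\bigl(\log_2(1/\delta),\ \tfrac{m}{n}\log_2(\delta/\varepsilon)\bigr)$ and observing that $\ell\mapsto\ell+j_0$ is injective, so the good $\ell$'s already inject into the bad $k$'s; but your version is equally valid.
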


Clearly, the set of points with divergent trajectories is contained in the set of points escaping on average. Thus the lower estimate from \cite{CC14} implies that  in the  case \eq{vectors}{\min(m,n) = 1,\ m+n \ge 3}  the bound in \equ{on average} is sharp; the same has been established in  \cite{KP12}  when $m=n=1$. We conjecture that the equality in \equ{on average} holds in all dimensions $m,n$. Also, it is natural to conjecture that, unless $m=n=1$, 
 the set of points with divergent trajectories has the same \hd\ as of those who escape on average; this  follows from  \cite{CC14} and Theorem \ref{thm:escape} in the case  \equ{vectors}.

Our second 
 result  relates the entropy of an invariant measure on $X_{m+n}$ to its mass in a compact set, in a way which is uniform over all invariant probability measures on $X_{m+n}$.

\begin{thm}\label{thm:entropy}
For every $\varepsilon>0$ there exists a compact subset $Q=Q(\varepsilon)$ of $X_{m+n}$ such that 
\eq{concli1.3}
{h_\mu(g_1)\le \big(m+n-1+\mu(Q)\big)mn+\varepsilon}
for any $g_1$-invariant probability measure $\mu$ on $X_{m+n}$.
\end{thm}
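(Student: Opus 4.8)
The plan is to establish \equ{concli1.3} via the method of integral inequalities for a suitable proper function $f$ on $X_{m+n}$, combined with the variational characterization of entropy. First I would construct a function $f : X_{m+n} \to [1,\infty)$ (a ``height function'' in the sense of Eskin--Margulis--Mozes) built from the Siegel-type sums $\sum_{v} \|v\|^{-mn/(m+n)}$ over primitive vectors (and more generally over rational subspaces of the lattice) so that $f$ is proper, i.e.\ $\{f \le R\}$ is compact for every $R$, and so that $f$ satisfies the crucial contraction estimate
\eq{IMIsketch}{\frac{1}{mn}\int_U f(g_1 u_s x)\,d\mu_U(s) \le e^{-(m+n)} f(x) + b}
for some constant $b>0$ and an appropriate averaging measure $\mu_U$ on a bounded piece of $U$; here the exponent $m+n$ is the key gain coming from the eigenvalue gap of $g_1$ (the top eigenvalue $e^{nt}$ versus bottom $e^{-mt}$ gives a contraction rate governed by the sum $m+n$ after choosing the homogeneity degree of $f$ to be $mn/(m+n)$). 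The extraction of \equ{IMIsketch} is the heart of the matter and follows the linear-algebra estimates of Eskin--Margulis--Mozes, tracking how $g_1$ acts on each subspace of a given dimension and checking that the ``dangerous'' directions are few.

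Next I would convert \equ{IMIsketch} into a statement about $g_1$-invariant measures. Integrating \equ{IMIsketch} against any $g_1$-invariant probability measure $\mu$ (using invariance of $\mu$ under $g_1$ and Fubini to move the $U$-average inside) would give $\int f\,d\mu \le \big(e^{-(m+n)}\int f\,d\mu\big)\cdot(\text{const}) + b$, hence a uniform bound $\int f\,d\mu \le C$ for all invariant $\mu$; but more importantly, the standard entropy-drop argument from the integral inequality yields that the ``entropy contribution'' of the part of the orbit where $f$ is large is strictly less than the maximal possible rate. Concretely, the maximal entropy of $g_1$ on $X_{m+n}$ equals the sum of positive Lyapunov exponents, which is $mn(m+n-1)$ — wait, let me recompute: $U \cong M_{m,n}$ has dimension $mn$ and $g_1$ expands it by $e^{(m+n)t}$, giving top entropy $mn(m+n)$; one then sees $h_\mu(g_1) \le mn(m+n)$ always, and the refinement coming from \equ{IMIsketch} is that mass escaping to the cusp (where $f$ is large) contributes at a reduced rate, costing $mn$ per unit of escaped mass, which is exactly the content of $h_\mu(g_1) \le (m+n-1+\mu(Q))mn+\varepsilon$.

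The technical mechanism for this last step is the following: given $\varepsilon > 0$, choose $Q = \{f \le R\}$ with $R$ large; partition $X_{m+n}$ according to the value of $f$; use the fact that on the complement of $Q$ the dynamics is ``contracting on average'' to show, via a Margulis-type counting or via the Brin--Katok / Ledrappier--Young formula applied to the $U$-foliation, that the local entropy along unstable leaves is reduced on the bad set. I expect the \textbf{main obstacle} to be exactly the derivation of the sharp integral inequality \equ{IMIsketch} with the correct constant $e^{-(m+n)}$ uniformly in $x$ near the cusp: one must handle the interaction of several rational subspaces simultaneously (the function $f$ is a maximum or sum over dimensions $1, \dots, m+n-1$ of subspace-height functions), and the combinatorics of which subspaces can simultaneously be ``small'' is delicate — this is where the EMM technique of passing to a carefully weighted sum, together with the geometry-of-numbers inequalities controlling $\alpha_i(g u \Lambda)$ in terms of $\alpha_j(\Lambda)$, must be invoked and adapted to the present non-uniform setting. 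A secondary difficulty is making the entropy bookkeeping genuinely uniform over \emph{all} invariant probability measures rather than just ergodic ones, which is handled by the ergodic decomposition together with concavity/affineness of $h_\mu(g_1)$ and $\mu \mapsto \mu(Q)$ in $\mu$.
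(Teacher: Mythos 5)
Your overall strategy is the same as the paper's at the top level: build an Eskin--Margulis--Mozes height function out of the subspace quantities $\alpha_i$, prove a contraction (integral) inequality for an average over a bounded piece of $U$, convert this into an entropy bound that costs $mn$ per unit of mass outside a sublevel set $Q$, and reduce to ergodic measures by ergodic decomposition (noting, as you do, that $Q$ must not depend on $\mu$). However, there is a genuine gap in the middle step, precisely where the paper does most of its work. First, your proposed shortcut of ``integrating the integral inequality against $\mu$'' to get a uniform bound on $\int f\,d\mu$ does not work: the left-hand side of your inequality involves the extra average over $u_s\in U$, and a $g_1$-invariant $\mu$ is not $U$-invariant, so that integral is not $\int f\,d\mu$; moreover no such uniform bound can hold, since there are $g_1$-invariant probability measures (e.g.\ supported on compact $g_t$-orbits going arbitrarily deep into the cusp) with $\int f\,d\mu$ arbitrarily large. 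Second, and more importantly, the passage from the contraction estimate to the precise coefficient $(m+n-1+\mu(Q))mn$ is left at the level of ``standard entropy-drop argument'' / ``Margulis-type counting or Brin--Katok / Ledrappier--Young,'' and this is exactly the nontrivial content of the paper's Theorem~\ref{thm:main}: one iterates the inequality along orbit segments that stay outside $Q$ (Proposition~\ref{prop:iterates}), deduces that the set of $u\in B_1^U$ whose whole segment stays outside $Q$ has small covering number (Corollary~\ref{cor:coverN}), and then runs an induction over the decomposition of $\{1,\dots,N\}$ into excursions inside and outside $Q$ to show that $Z_x(Q,N,t,\delta)$ is covered by $C(x)t^{3N}e^{(m+n-\delta)mntN}$ balls of radius $e^{-(m+n)tN}$. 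Only after that do the Brin--Katok-type Lemma~\ref{lem:entropy} and the pointwise ergodic theorem (which lets one take $\delta$ close to $\mu(X\ssm Q)=1-\mu(Q)$), together with absorbing the weak-stable directions into Bowen balls, yield \equ{concli1.3}. Without this covering/counting mechanism, or a genuinely worked-out substitute (a leafwise-measure argument in the style of Einsiedler--Kadyrov would be a different proof, not the one you sketch), the stated inequality does not follow from the integral inequality alone.

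A smaller point: your guessed normalization (contraction factor $e^{-(m+n)}$ for a single function of homogeneity $mn/(m+n)$) is not the bookkeeping that makes the argument close. What is needed, and what the paper proves, is a factor $\asymp t\,e^{-mnt}$ for each $\alpha_i^{\beta_i}$ with the dimension-dependent exponents \equ{defbeta}, and the different dimensions must be combined through the concavity construction of Proposition~\ref{prop:Delta} to handle the cross terms $\sqrt{\alpha_{i+j}\alpha_{i-j}}$; since you explicitly flagged the derivation of the inequality as your main obstacle this is forgivable, but be aware that the exponent pattern you propose would not produce the sharp constant $mn$ per unit of escaped mass.
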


As a consequence of Theorem~\ref{thm:entropy} we obtain the following.

\begin{thm}\label{thm:sequence}
For any $h > 0$ and any sequence $(\mu_k)_{k\ge 1}$ of $g_1$-invariant probability measures on $X_{m+n}$ with entropies $h_{\mu_k}(g_1) \ge h$, any weak$^*$ limit $\mu$ of the sequence satisfies 
$$\mu(X_{m+n}) \ge \frac{h}{mn}-(m+n-1)\,.$$
\end{thm}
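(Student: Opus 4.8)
The plan is to deduce Theorem~\ref{thm:sequence} from Theorem~\ref{thm:entropy} by combining a \emph{uniform} lower bound on the $Q$-mass of the measures $\mu_k$ with the upper semicontinuity of mass of compact sets under weak$^*$ limits on the (non-compact) space $X_{m+n}$.

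First I would fix $\varepsilon>0$ and take $Q=Q(\varepsilon)$ to be the compact set supplied by Theorem~\ref{thm:entropy}. Since $h_{\mu_k}(g_1)\ge h$ by hypothesis, that theorem gives
\[
h\ \le\ h_{\mu_k}(g_1)\ \le\ \big(m+n-1+\mu_k(Q)\big)mn+\varepsilon \qquad\text{for every }k,
\]
hence $\mu_k(Q)\ \ge\ \frac{h-\varepsilon}{mn}-(m+n-1)=:c(\varepsilon)$ for every $k$. (This is vacuous unless $h>mn(m+n-1)$, which is the only interesting range.)

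Next I would pass to the limit. After extracting a subsequence we may assume $\mu_k\to\mu$ in the weak$^*$ (vague) topology; a priori $\mu$ is only a sub-probability measure, mass possibly escaping to infinity. The key point is that since $Q$ is compact, hence closed, mass can only be lost and not gained in the limit: $\limsup_k\mu_k(Q)\le\mu(Q)$. To see this, choose a relatively compact open set $U\supseteq Q$ and, by Urysohn's lemma, $f\in C_c(X_{m+n})$ with $\mathbf 1_Q\le f\le\mathbf 1_{\overline U}$; then $\mu_k(Q)\le\int f\,d\mu_k\to\int f\,d\mu\le\mu(\overline U)$, so $\limsup_k\mu_k(Q)\le\mu(\overline U)$. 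Letting $U$ run over a decreasing sequence of relatively compact open neighbourhoods of $Q$ whose closures intersect in $Q$, and using continuity from above of the (finite on each such closure) measure $\mu$, we obtain $\inf_U\mu(\overline U)=\mu(Q)$, and the claimed inequality follows. Combining the two steps,
\[
\mu(X_{m+n})\ \ge\ \mu(Q)\ \ge\ \limsup_k\mu_k(Q)\ \ge\ c(\varepsilon)\ =\ \frac{h-\varepsilon}{mn}-(m+n-1),
\]
and since $\varepsilon>0$ was arbitrary, letting $\varepsilon\to0$ gives $\mu(X_{m+n})\ge\frac{h}{mn}-(m+n-1)$, as desired.

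The entire substance of the argument is carried by Theorem~\ref{thm:entropy}; there is no serious obstacle here. The one point that needs care — and the only reason compactness of $Q$ is used — is getting the direction of the inequality right under weak$^*$ convergence on a non-compact space: mass of a closed set is upper semicontinuous, whereas the analogous bound for an open set (or for the total mass) would go the wrong way, which is precisely the phenomenon of escape of mass that this theorem quantifies.
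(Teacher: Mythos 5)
Your proposal is correct and follows essentially the same route as the paper: apply Theorem~\ref{thm:entropy} with the compact set $Q(\varepsilon)$ to get the uniform bound $\mu_k(Q)\ge\frac{h-\varepsilon}{mn}-(m+n-1)$, pass to the weak$^*$ limit via a compactly supported test function dominating $\mathbf 1_Q$, and let $\varepsilon\to 0$. Your extra detour through the upper semicontinuity $\limsup_k\mu_k(Q)\le\mu(Q)$ with shrinking neighbourhoods is valid but unnecessary, since the paper simply bounds $\mu(X_{m+n})\ge\int f\,d\mu=\lim_k\int f\,d\mu_k\ge\limsup_k\mu_k(Q)$ directly.
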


We remark that the maximal entropy is $(m+n)mn$, and for any value $h \in \big((m+n-1)mn, (m+n)mn\big]$  Theorem~\ref{thm:sequence} produces a nontrivial result. A similar statement was first proved in \cite{ELMV13} for the geodesic flow on the unit tangent bundle to the hyperbolic plane. Later, various generalizations were considered in \cite{EK12, Kad12(b),EKP13}. Theorem~\ref{thm:entropy} can be thought as a  generalization of   analogous results from \cite{ELMV13, EK12}. However the method used in the present paper is different from the previous work and crucially relies on the ideas from \cite{EMM}. {There are also interesting parallels to these results in the context of moduli spaces of abelian and quadratic differentials, in particular 
 \cite{Hamendstadt} by U.\ Hamenst\"adt; \cite{Athreya} and~\cite{Eskin-Mirzakhani} give applications of the \cite{EMM} techniques in the context of these moduli spaces that are also relevant.
}

We believe that Theorem~\ref{thm:sequence} is sharp in the sense that for any constant $h \in \big[0, (m+n)mn\big]$ there should exists a sequence of probability invariant measures $(\mu_k)_{k \ge 1}$ with $\lim_k h_{\mu_k}(g_1) =h$ such that the limit measure $\mu$ satisfies $\mu(X_{m+n})=\max\{\frac{h}{mn}-(m+n-1),0\}$. In \cite{Kad12(a)}, the claim was proved to be true when $\min(m,n)=1$.

Both Theorem~\ref{thm:escape} and Theorem~\ref{thm:entropy} are derived from 
 Theorem~\ref{thm:main}, the main result of this paper. In what follows we fix $m,n$ and  for the sake of brevity denote  $X_{m+n}$ by $X$. Take $U$ as in  \equ{uhs} and let $d_U$ be the distance induced by the Euclidean norm $\|\cdot\|$ on $M_{m,n}$ via the map $s\mapsto u_s$.  Also let 
\eq{ball}{B_r^U:=\{u_s :   \|s\| < r\} = \{u\in U : d_U(u,e) < r\}}
be the ball of radius $r$ centered at the identity element.
Then given a compact subset $Q$ of $X$, $N\in\N$, 
$\delta \in (0,1)$, $t>0,$ and $x \in X$, define the set 
\eq{zx}{Z_x(Q,N,t,\delta):=\left\{u \in B_{1}^U : \frac{1}{N}\big|\big\{\ell \in \{1,\dots,N\}:g_{t\ell} u x  
\,\notin Q\big\}\big| \ge \delta  \right\};}
in other words, the set of  $u\in B_{1}^U$ such that up to time $N$, the proportion of times $\ell$ for which the orbit point $g_{t\ell} ux$ is in the 
complement of $Q$  is at least~$\delta$.

The following statement is a covering result which we need for the proof of previously mentioned theorems:

\begin{thm}\label{thm:main}
There exists $t_0>0$ and a function $C: X \to \R_+$ such that the following holds. For any $t >t_0$ there exists  
a compact set $Q:=Q(t)$ of $X$ such that for any $N\in\N$, 
$\delta \in (0,1)$, and $ x \in X$, the set
$Z_x(Q,N,t,\delta)$
can be covered with $ C(x)t^{3N} e^{(m+n-\delta)mntN }$ balls in $U$ of radius $e^{-(m+n)tN}$.
\end{thm}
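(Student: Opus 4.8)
The plan is to build the covering of $Z_x(Q,N,t,\delta)$ by combining the Eskin--Margulis--Mozes height-function machinery with a tree/dyadic subdivision of the unit ball $B_1^U$. The key object is a proper function $f\colon X\to[1,\infty)$ (a ``height function''), constructed from the Siegel transform of a carefully chosen function on the space of lattices, satisfying an integral inequality of the form
\[
(A_{t}f)(x) := \int_{B_1^U} f(g_{t} u x)\,du \le c\,e^{-\kappa t} f(x) + b
\]
for suitable constants $\kappa>0$, $b,c>0$ once $t>t_0$, where the averaging is over the unit ball in $U$ (this is where the contraction rate $e^{-(m+n)\cdot\text{something}}$ on $U$-balls under $g_t$, together with the structure of $U\cong M_{m,n}$ of dimension $mn$, enters: each application of $g_t$ expands $U$-directions by $e^{(m+n)t}$, so $B_1^U$ pulls back under $g_{-t}$ to roughly $e^{(m+n)tmn}$ balls of the next scale). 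The set $Q=Q(t)$ will be a superlevel set $\{f\le R\}$ for $R=R(t)$ chosen large; being proper, $f$ has compact superlevel sets, so this $Q$ is compact.

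**The main steps, in order.** (i) Recall/cite the EMM-type construction of $f$ and the integral inequality above, making the dependence of constants on $t$ explicit enough that $c e^{-\kappa t}<1$ for $t>t_0$. (ii) Iterate: subdivide $B_1^U$ into a tree of balls, at level $\ell$ having radius $\sim e^{-(m+n)t\ell}$ and numbering $\sim e^{(m+n)tmn\ell}$, so that $g_{t\ell}$ maps each level-$\ell$ ball to something of bounded diameter in $U$; the image $g_{t\ell}\,(\text{ball})\,x$ is then comparable to a single point $g_{t\ell}ux$ up to a bounded factor in $f$-value. (iii) For a point $u\in Z_x(Q,N,t,\delta)$, at least $\delta N$ of the times $\ell\in\{1,\dots,N\}$ have $g_{t\ell}ux\notin Q$, i.e.\ $f(g_{t\ell}ux)>R$; using the iterated integral inequality (a telescoping/Chebyshev argument over the $N$ levels), the measure of the set of branches of the tree that fail to enter $Q$ at $\ge\delta N$ of the first $N$ scales is bounded by roughly $f(x)\cdot(c e^{-\kappa t}/R^{?})^{\delta N}$ times the total; counting the surviving level-$N$ balls then gives $\le C(x)\,(\text{polynomial in }t)^{N}\,e^{(m+n)tmn N}\,e^{-\delta\,(\text{const})\,tmnN}$ balls of radius $e^{-(m+n)tN}$. (iv) Bookkeep the constants so that the exponent reads exactly $(m+n-\delta)mntN$ and the polynomial factor is absorbed into $t^{3N}$, and collect the $f(x)$-dependence into $C(x)$.

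**The hard part** will be step (iii): converting the $L^1$ integral inequality into a \emph{covering} (counting) statement with the sharp exponent $-\delta mnt N$ in the exponential, uniformly in $N$, $\delta$, and $x$. Two technical points need care. First, the integral inequality only controls averages, so to count \emph{balls} rather than measure one must run the contraction at the discrete tree level and use that $f\ge1$ everywhere, converting ``small integral of $f$ over bad branches'' into ``few bad branches,'' which costs the factor $R^{-\delta N}$ — one must check $R=R(t)$ can be taken large enough (growing with $t$) to make the exponential rate come out to exactly $\delta mnt$ rather than merely some positive multiple of $\delta$; this is exactly where one uses that at each bad level one \emph{gains} a full factor of order $e^{(m+n)t}$ in the contraction (not just $e^{\kappa t}$), because outside the fixed compact set $Q$ the relevant lattice has a short vector and the height drops at the maximal rate. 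Second, passing from the average over a ball $g_{t\ell}(B^U)x$ to the value at the center point $g_{t\ell}ux$ requires a Lipschitz/doubling estimate for $f$ along $U$-orbits at bounded scale, which is standard for Siegel-transform height functions but must be stated. The polynomial-in-$t$ slack ($t^{3N}$ rather than $t^{2N}$, say) gives enough room to absorb all such bounded multiplicative errors at each of the $N$ levels.
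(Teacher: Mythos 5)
Your overall architecture (an EMM-type height function satisfying an integral inequality, a multiscale subdivision of $B_1^U$ into $\sim e^{(m+n)mnt\ell}$ balls of radius $e^{-(m+n)t\ell}$, Chebyshev to pass from measure to ball count, and taking $Q$ to be a sublevel set of the height) is indeed the paper's strategy in outline. But the step you yourself flag as the hard part is resolved by a mechanism that does not work. The sharp saving of $e^{-mnt}$ per bad time does \emph{not} come from taking $R=R(t)$ large, nor from the claim that outside $Q$ ``the height drops at the maximal rate $e^{(m+n)t}$ because the lattice has a short vector'': a short vector may lie in the expanding directions, so the height at a single point can \emph{increase}; all one ever has is contraction of the \emph{average} over $u\in B_1^U$, and its rate is fixed by the height function, not by $R$ (enlarging $R$ only suppresses the additive constant $b$). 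With a generic Siegel-transform height of unspecified contraction exponent $\kappa$ you would only obtain $e^{(m+n-c'\delta)mntN}$ for some positive $c'$, not the stated $(m+n-\delta)$. The missing ingredient is the construction of a height function whose averaging operator contracts at rate exactly $cte^{-mnt}$: in the paper this is the Gaussian wedge estimate $\int_K\|g_tkv\|^{-\beta_i}dk\ll te^{-mnt}\|v\|^{-\beta_i}$ with the borderline exponents $\beta_i=m/i$, $n/(m+n-i)$, transferred from $K$ to $U$, and combined over $i$ by the convexity argument of Proposition~\ref{prop:Delta} into $\tilde\alpha=\sum(\omega_i\alpha_i)^{\beta_i}$; this choice is what makes the per-step gain come out to $e^{-mnt}$ and also what produces the function $C(x)$ of Remark~\ref{defining alpha}.

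A second, smaller gap: the integral inequality only yields contraction along \emph{consecutive} excursions outside $Q$, since at a time inside $Q$ the height can jump back up; your ``telescoping/Chebyshev over the $N$ levels'' does not by itself bound the measure (let alone the ball count) of the set where at least $\delta N$ scattered times are bad. The paper handles this by summing over the at most $2^N\le t^N$ possible patterns $J$ of bad times and running an induction over maximal bad/good blocks: on each bad block it applies the iterated-contraction covering bound (Proposition~\ref{prop:iterates}/Corollary~\ref{cor:coverN}) re-centered at $x'=g_t^{L}u_ix$, on each good block the trivial covering, and it is crucial that the re-centering happens at bounded height (because the previous time was in $Q$, up to a factor $e^{C_3t}C_{\tilde\alpha}$), so the factors $\tilde\alpha(x')/M$ do not accumulate; the slack $t^{3N}$ absorbs exactly the pattern count and these bounded per-block losses. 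Your measure-to-covering concern (Lipschitz/doubling of the height along $U$ at bounded scale, i.e.\ the two-threshold trick with $M$ and $C_{\tilde\alpha}^{-1}M$) is correctly identified and is handled in the paper as you anticipate, but without the sharp contraction rate and the block/pattern decomposition the proof as proposed does not reach the stated exponent.
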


\begin{remark}\label{defining alpha}
The function $C$ can actually be made precise as follows. 
For any $i=1,\dots,m+n$ and any $x \in X$ we let $F_i(x)$ denote the set of all $i$-dimensional subgroups 
of $x$ (we recall that the latter is viewed as a unimodular lattice in $\R^{m+n}$). For any $L \in F_i(x)$ we let $\|L\|$ denote the volume of $L/(L \cap x)$ with respect to the standard Euclidean structure on $\R^{m+n}$. 
(Equivalently,  $\|L\|=\|v_1  \wedge \cdots \wedge v_i\|$ where  $\{v_1, \dots,v_i\}$  is a basis for $L$.) Then, following \cite{EMM}, define 
\eq{defalphai}{\alpha_i(x):=\max\left\{\frac{1}{\|L\|}: L \in F_i(x)\right\}}
and take
\eq{cx}{C(x):=\max\{\alpha_i^{\beta_i}(x):i=1,2,\dots,m+n-1\},}
where 
for any $i \in \{1,\dots,m+n-1\}$ we let \eq{defbeta}{\beta_i:=\begin{cases}\frac{m}{i}\qquad\text{ if }i \le m\\   \frac{n}{m+n-i}\text{ if }i>m.\end{cases}}
\end{remark}

In the next section, we show how to deduce Theorems ~\ref{thm:escape} and \ref{thm:entropy} from Theorem~\ref{thm:main}. From \S3, the rest of the paper is devoted to obtaining Theorem~\ref{thm:main}.

\subsection*{Acknowledgements}  We thank J.~Athreya, Y.~Cheung, M.~Einsiedler, A.~Eskin and M.~Mirzakhani for helpful discussions, and the referee for useful comments. We are also grateful to MSRI, where this paper was finalized, for its hospitality during Spring 2015. E.L. also thanks the Israel IAS for ideal working conditions during the fall of 2013.

\section{Proofs assuming Theorem~\ref{thm:main}}

\subsection*{Notation} 
  In what follows, by $x \ll y$ (resp., $x \asymp y$) we mean $x<Cy$ (resp., $c y <x<Cy$) for some absolute constants $c,C>0$ depending only on $m,n$. Fix a right-invariant Riemannian metric 
  on $G$, inducing a metric on $X$ which will be denoted  by $d_X$. We let $B_r^G$ denote the open ball in $G$ of radius $r$ centered at the identity element.

\begin{proof}[Proof of Theorem~\ref{thm:escape}] Let $x \in X$ be given. Fix $t_0>0$ as in 
Theorem~\ref{thm:main}, and for any $t>t_0$ choose the compact set $Q$ as in Theorem~\ref{thm:main}.
Then, using Theorem~\ref{thm:main} we get that for any $N \in \N$, each set $Z_x(Q,N,t,\delta)$ 
can be covered with $C(x)t^{3N} e^{(m+n-\delta)mntN}$ balls of radius $e^{-(m+n)tN}.$

Denote by $Z_x$ the set of all $u\in B_1^U$ such that $u x$ escapes on average. Note that
\begin{equation}\label{eqn:Zx}
Z_x \subset \bigcup_{N_0 \ge 1}\bigcap_{N \ge N_0} Z_x(Q,N,t,\delta) \text{ for any } \delta \in (0,1) 
.
\end{equation}
Using Theorem~\ref{thm:main}, we can write
\begin{multline*}
\overline \dim_{\rm box} \Big(\bigcap_{N \ge N_0} Z_x(Q,N,t,\delta)\Big) \le \overline\lim_{N \to \infty} \frac{\log \big(C(x) t^{3N} e^{(m+n-\delta)mntN }\big) }{-\log ( e^{-(m+n)tN}) }\\
=\frac{3\log t + (m+n-\delta)mnt}{(m+n)t}.
\end{multline*}
This is true for any $t>t_0$ and any $\delta \in (0,1)$; thus, letting $t \to \infty$ and $\delta \to 1$, we get from \eqref{eqn:Zx} that
$\dim (Z_x) \le  \frac{(m+n-1)mn}{m+n}.$ Since the set \equ{esconav} is contained in a countable union of the sets of the form $Z_x$, the first part of Theorem~\ref{thm:escape} follows.

Now, let $P^0$ be the weak stable horospherical subgroup with respect to $g_1$, namely
\begin{equation}\label{eqn:weaks}P^0:= \left\{ \begin{pmatrix}s' & 0\\ s & s''\end{pmatrix}\left|\begin{aligned}\  s \in M_{n,m} , \ s'\in M_{m,m},\ s''\in M_{n,n}\\  \det(s')\det(s'') = 1\qquad\quad\end{aligned}\right.\right\}.\end{equation}

 Every element of a neighborhood of identity in $G$ can be written as $gu$ where $u$ belongs to a neighborhood of identity in $U$ and $g\in P^0$. Note that for $g \in P^0$ the union
$$
\bigcup_{t > 0} g_t g g_{-t}$$ is contained in a compact subset of $G$. Writing $g_tgu =  g_t g g_{-t}(g_tu)$, one sees that $ux$ escapes on average if and only if so does $gux$. Therefore the `consequently' part follows  from the slicing properties of the \hd.
\end{proof}

\begin{remark} One can generalize the definition of escape on average, saying that  a point $x \in X$  {\sl $\delta$-escapes on average}, where $0 < \delta \le 1$,  if 
$$\lim_{N\to \infty} \frac{1}{N}\big|\big\{\ell \in \{1,\dots,N\}: g_\ell x \notin Q\big\}\big|\ge \delta$$
for any compact $Q\subset X$. The previous definition corresponds to $\delta = 1$. Our proof of Theorem~\ref{thm:escape} actually establishes that 
$${\dim\left(\{x\in X: x \text{\ $\delta$-escapes on average}\}\right) \le \dim (X) - \frac{\delta mn}{m+n}\,.}  $$
It seems plausible to conjecture that the above bound is sharp for all $m,n$ and any $0 < \delta \le 1$.
\end{remark}

We now proceed with the proof of Theorem~\ref{thm:entropy}. Let us fix $t_0$ as in Theorem~\ref{thm:main} and take $t >t_0$. We fix sufficiently small $\eta>0$ such that the ball $B_\eta^G$ is an injective image under the exponential map of a neighborhood of $0$ in the Lie algebra of $G$. For any $N \in \N$ we define a {\sl Bowen $N$-ball\/} to be any set of the form $\Bow(N) x$ where $x \in X$ and
\eq{defbow}{\Bow(N):=\bigcap_{\ell=0}^{N-1} g_{-t\ell} B_{\eta}^{G}g_{t\ell}.}
We need the following lemma, which relates the entropy and covers by Bowen balls. It essentially is due to Brin and Katok \cite{BK83}, though there are some modifications needed for the non-compact case.

\begin{lem}
\label{lem:entropy}Let $\mu$ be an ergodic $ g_t$-invariant probability measure on $X$ and let $A\subset X$ be a measurable subset with $\mu(A) > 0$. For any $N\geq 1$  let $BC(A,N)$ be the minimal number of
Bowen $N$-balls needed to cover $A$. Then
$$
h_{\mu}( g_t)\le \liminf_{N \to \infty}\frac{\log BC(A,N)}{N}.
$$
\end{lem}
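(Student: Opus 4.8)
The plan is to adapt the Brin--Katok local entropy formula to this setting, being careful about non-compactness. Recall that Brin--Katok expresses $h_\mu(g_t)$, for an ergodic measure, as the $\mu$-a.e.\ limit of $-\frac 1N \log \mu\big(\Bow(N)y\big)$. First I would set up notation: for fixed $\eta$ as chosen above, write $B(y,N) := \Bow(N)y$ for the Bowen $N$-ball around $y$, and let $g(y) := \lim_{N\to\infty} -\frac 1N\log\mu\big(B(y,N)\big)$. The classical Brin--Katok theorem \cite{BK83} gives that this limit exists for $\mu$-a.e.\ $y$ and equals $h_\mu(g_t)$; the only subtlety in the non-compact case is that the Bowen balls must be defined with a fixed metric ball $B_\eta^G$ (rather than balls that shrink or depend on $y$), which is exactly how $\Bow(N)$ is defined in \equ{defbow}, so the standard argument applies verbatim once one knows the partition-free formulation holds. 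Strictly, one should invoke the version of Brin--Katok valid for arbitrary probability-preserving systems on a (locally compact, second countable) metric space, which is available; this handles the non-compactness remark in the statement.

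Next, the counting step. Fix $\varepsilon > 0$. By the above, there is a set $A_\varepsilon \subset X$ with $\mu(A_\varepsilon) > 1 - \mu(A)/2$ (so that $\mu(A \cap A_\varepsilon) > \mu(A)/2 > 0$) and an $N_0$ such that for all $y \in A_\varepsilon$ and all $N \ge N_0$ we have $\mu\big(B(y,N)\big) \ge e^{-N(h_\mu(g_t)+\varepsilon)}$. Now take $N \ge N_0$ and an optimal cover of $A$ by $\BC(A,N)$ Bowen $N$-balls. At least one subcollection: each Bowen $N$-ball in the cover that meets $A \cap A_\varepsilon$ contains a point $y$ of $A_\varepsilon$, and a Bowen $N$-ball $\Bow(N)z$ containing such a $y$ is itself contained in the "doubled" Bowen ball $\Bow'(N)y$ defined with $B_{2\eta}^G$ in place of $B_\eta^G$ (using right-invariance of the metric and the group structure of $\Bow(N)$, so that $\Bow(N)\Bow(N)^{-1} \subset \Bow'(N)$). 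Hence its $\mu$-measure is at most $\mu\big(\Bow'(N)y\big)$. If $\eta$ was chosen small enough at the outset that the Brin--Katok limit is also valid for the $2\eta$-Bowen balls — which it is, since Brin--Katok gives the same value $h_\mu(g_t)$ for every sufficiently small radius — we may instead simply enlarge $A_\varepsilon$ so that the lower bound $\mu\big(\Bow'(N)y\big) \le$ ... no: what we actually want is an upper bound on the measure of each covering ball, so we use $\mu\big(\Bow'(N)y\big) \le 1$ trivially? That is too weak. Instead use: each covering Bowen $N$-ball meeting $A\cap A_\varepsilon$ has measure $\ge e^{-N(h_\mu(g_t)+\varepsilon)}$ by applying the lower bound at the point $y \in A_\varepsilon$ it contains, after noting $\Bow(N)z \supseteq$ a $\Bow(N)$-ball around... more carefully: if $y \in \Bow(N)z$ then $\Bow(N)z = \Bow(N)y'$ need not hold, but $\Bow(N)z \supseteq \Bow''(N)y$ where $\Bow''(N)$ uses a slightly smaller radius $\eta/2$, again because $\Bow(N)$ is a symmetric neighborhood closed under the relevant multiplication up to radius adjustment. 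Choosing $A_\varepsilon$ via Brin--Katok at radius $\eta/2$ then gives $\mu\big(\Bow(N)z\big) \ge \mu\big(\Bow''(N)y\big) \ge e^{-N(h_\mu(g_t)+\varepsilon)}$.

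Finally, sum: the Bowen $N$-balls in the optimal cover that meet $A \cap A_\varepsilon$ are not necessarily disjoint, but each point of $X$ lies in at most $K$ of them, where $K$ is a bounded multiplicity depending only on $\eta$ and $G$ — actually one does not even need bounded multiplicity, since one can first pass to a disjoint subfamily: among the balls meeting $A\cap A_\varepsilon$, greedily select a maximal subfamily that is pairwise disjoint; a standard Vitali-type argument (or simply the observation that the selected balls, slightly enlarged, cover $A\cap A_\varepsilon$) shows the number of \emph{selected} balls is at least $c \cdot \BC(A\cap A_\varepsilon, N)$, but more simply we can just bound directly: letting $\mathcal F$ be the set of covering balls meeting $A\cap A_\varepsilon$, we have $|\mathcal F| \le \BC(A,N)$ and, choosing a disjoint subfamily $\mathcal F'\subset\mathcal F$ with $\sum_{B\in\mathcal F'}\mu(B) \le 1$ together with $|\mathcal F| \le K|\mathcal F'|$ (bounded overlap of Bowen balls in $G$), we get $\BC(A,N) \ge |\mathcal F| \ge |\mathcal F'| \ge \mu(A\cap A_\varepsilon)/\max_{B}\mu(B) \ge \tfrac{\mu(A)}{2} e^{N(h_\mu(g_t)+\varepsilon)}$. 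Taking logarithms, dividing by $N$, and letting $N\to\infty$ gives $\liminf_N \frac{\log \BC(A,N)}{N} \ge h_\mu(g_t)+\varepsilon$; wait — that would be the wrong inequality direction. The point is rather: each ball in $\mathcal F$ has measure \emph{at most} some upper bound obtained from Brin--Katok's \emph{upper} estimate $\mu\big(\Bow(N)z\big) \le e^{-N(h_\mu(g_t)-\varepsilon)}$ valid for balls centered in $A_\varepsilon$ at radius $2\eta$, and since $\mathcal F$ covers $A\cap A_\varepsilon$ we get $\mu(A)/2 < \mu(A\cap A_\varepsilon) \le \sum_{B\in\mathcal F}\mu(B) \le |\mathcal F|\, e^{-N(h_\mu(g_t)-\varepsilon)} \le \BC(A,N)\, e^{-N(h_\mu(g_t)-\varepsilon)}$, whence $\BC(A,N) \ge \tfrac{\mu(A)}{2} e^{N(h_\mu(g_t)-\varepsilon)}$, and therefore $\liminf_N \frac{\log\BC(A,N)}{N} \ge h_\mu(g_t)-\varepsilon$. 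Letting $\varepsilon\to 0$ finishes the proof.

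The main obstacle, and the reason the lemma needs a proof at all rather than a citation, is the non-compactness: one must ensure that the Brin--Katok $\mu$-a.e.\ convergence of $-\frac 1N\log\mu\big(\Bow(N)y\big)$ to $h_\mu(g_t)$ genuinely holds with the \emph{fixed}-radius Bowen balls $\Bow(N)$ on the non-compact space $X$, and that the comparison between a covering ball $\Bow(N)z$ and a ball $\Bow'(N)y$ centered at an interior point $y$ (needed to transfer the measure estimate to the covering balls) only costs a bounded adjustment of the radius $\eta$ — both of which hold because $\Bow(N)$ is built from a fixed, small, symmetric group neighborhood, so the usual Brin--Katok machinery and the triangle inequality go through with $\eta$ replaced by $2\eta$ throughout, at the price of choosing $\eta$ small enough initially that $B_{2\eta}^G$ still embeds via $\exp$.
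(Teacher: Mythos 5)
Your covering skeleton is the right one, and it matches what any proof of this lemma must do: the doubling step ($\Bow(N)z\subset\bigcap_\ell g_{-t\ell}B^G_{2\eta}g_{t\ell}\,y$ whenever $y\in \Bow(N)z$, by right-invariance of the metric) and the final chain $\mu(A)/2<\mu(A\cap A_\varepsilon)\le\sum_{B\in\mathcal F}\mu(B)\le \BC(A,N)\,e^{-N(h_\mu(g_t)-\varepsilon)}$ are correct. The genuine gap is in the only nontrivial input, namely the estimate $\mu\bigl(\bigcap_\ell g_{-t\ell}B^G_{2\eta}g_{t\ell}\,y\bigr)\le e^{-N(h_\mu(g_t)-\varepsilon)}$ for $y$ in a large-measure set and all large $N$, at the \emph{fixed} radius $\eta$ chosen at the outset. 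Brin--Katok does not give this: it asserts that $-\frac1N\log\mu$ of the Bowen ball converges to $h_\mu(g_t)$ only after the radius is sent to $0$, and by monotonicity the a.e.\ value at any fixed radius satisfies only $\liminf_N-\frac1N\log\mu(\Bow(N)y)\le h_\mu(g_t)$ --- the inequality in the useless direction; for a general system it can be strictly smaller at a fixed scale. Your parenthetical ``Brin--Katok gives the same value $h_\mu(g_t)$ for every sufficiently small radius'' is precisely what the theorem does not say, and since $\eta$ is fixed before $\mu$ and $\varepsilon$ are given you cannot shrink it to suit them. Moreover, the half of Brin--Katok you need (the upper bound on Bowen-ball measure) is proved via finite partitions with small-diameter atoms and boundaries of measure zero, which do not exist on the non-compact space $X$; handling this --- a partition with one unbounded atom (or a countable finite-entropy partition), Shannon--McMillan--Breiman, and control of how often the orbit comes close to the atom boundaries and to the cusp --- is exactly the ``modification needed for the non-compact case'' that the statement is about, and you dismiss it with ``applies verbatim'' and an unnamed locally compact version.

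For comparison, the paper does not reprove this either: it outsources the lemma to \cite[Lemma~B.2]{ELMV13} and the remarks following it, whose argument is partition-based (comparing Bowen balls with atoms of the iterated partition, with the non-compact part treated separately) rather than a fixed-scale quotation of \cite{BK83}. To close your gap you would either reproduce that SMB/partition counting (a Bowen $N$-ball of radius $2\eta$ meets at most $e^{o(N)}$ atoms of $\bigvee_{\ell=0}^{N-1}g_{-t\ell}\mathcal P$ for generic centers, away from a set of small measure), or exploit the homogeneous structure specific to this setting: $\Bow(N)$ is comparable to a box $B^U_{c\eta e^{-(m+n)tN}}W$, so Bowen balls at two fixed small scales cover one another with multiplicity bounded independently of $N$, which is what makes a fixed-scale statement true here --- but even then the smaller balls in such a cover are not centered at generic points, so an SMB-type argument is still needed and a bare citation of Brin--Katok does not suffice. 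The meandering self-corrections in your write-up are not themselves errors (the final inequalities are the right ones), but the key estimate is asserted, not proved.
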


For a proof for $G=\SL(2,\mathbb R)$ see e.g.\ \cite[Lemma B.2]{ELMV13} and the remarks following it. The adaptation to $\SL(m+n,\mathbb R)$ is straightforward.

\begin{proof}[Proof of Theorem~\ref{thm:entropy}] Note first that it is sufficient to
consider ergodic measures. For if $\mu$ is not ergodic, we can
write $\mu$ as an integral of its ergodic components $\mu=\int
\mu_r d\nu(r)$ for some probability space $(E,\nu)$, see for example \cite{EW11}. Therefore, for any compact subset $Q$ of $X$ we have $\mu(Q)=\int_E \mu_r(Q)d\nu(r)$, but also $h_{\mu}( g_t)=\int_E h_{\mu_r}( g_t) d\nu(r)$, see for example \cite[Theorem~8.4]{Wal65}; hence the desired estimate follows from the ergodic case (note that we have used the fact that $Q$ does not depend on the measure at hand). From now on we assume $\mu$ to be ergodic.

Let $\varepsilon>0$ be given. Fix a sufficiently large $t>t _0$ such that $3\log t /t <\varepsilon$ 
with $t_0$ as in Theorem~\ref{thm:main}. For this $t$ let 
$Q$ be as in Theorem~\ref{thm:main}. We will establish the conclusion of 
Theorem~\ref{thm:entropy} for this compact set $Q$.

Note that we can assume that $\mu(Q) <1$, since otherwise  \equ{concli1.3} holds trivially due to the fact that $(m+n)mn$ is  an upper bound for $h_\mu(g_1)$. Choose $c > 0$ such that \eq{incl1}{B^U_{c\eta}\subset B^G_{\eta/2}} (we recall that the metric on $U$ does not coincide with the restriction of the metric on $G$, but locally near the identity element those metrics are close to each other). Also choose an open neighborhood $W$ of identity in the group  $P^0$ is as in \eqref{eqn:weaks} such that \eq{incl2}{B^U_{c\eta}W\subset B^G_{\eta},}
which is possible in view of \equ{incl1}.

Let us fix any $\mu$-generic point $x \in X$. Then 
$$\varepsilon':= \frac{\mu(B_1^U W x)}{2}>0.$$
The pointwise ergodic theorem implies
\eq{average}{
\frac{1}{N}\sum_{n=0}^{N-1}1_{
{X\ssm Q}}\big( g_{nt}(y)\big)\to \mu(
{X\ssm Q})
}
as $N \to \infty$ for $\mu$-a.e.\ $y \in X$. In particular, for any 
$\varepsilon'' \in \big(0,\mu(
{X\ssm Q})\big)$ there exists $N_0$ such that for $N>N_0$ the average in the
left hand side of \equ{average} will be bigger than $\mu(
{X\ssm Q}
)-\varepsilon''$ for any $y \in
Y$ for some $Y \subset X$ with measure $\mu(Y)> 1-\varepsilon'$.
Since  $\mu(B_1^U W x)=2\varepsilon'$, we see that
$\mu(Z)>\varepsilon'$ where 
$$Z:=Y\cap B_1^U Wx.$$ 
We now consider the covering of this set $Z$ by Bowen $N$-balls. 
Note that
$$B_{c\eta e^{-(m+n)tN}}^U W\subset \Bow(N)$$
in view of \equ{incl2} and \equ{defbow}.
Thus  it suffices to consider a covering of $$Z':=\{u \in B_1^U: ux \in Z\}=\{u \in B_1^U: ux \in Y\}$$ with balls of radius $c\eta e^{-(m+n)tN}$. Denote $\mu({X\ssm Q})-\varepsilon''$ by $\delta$; then  clearly
$$Z'\,\subset \bigcap_{N>N_0} Z_x(Q,N,t,\delta).$$
Applying Theorem~\ref{thm:main} 
we get that for any $N 
> N_0$, the set $Z'$ can be covered with $C(x) t^{3N} e^{(m+n-\delta
)mntN }$ balls of radius $e^{-(m+n)tN}$. 
Observe that a ball of radius $e^{-(m+n)tN}$ in $U$ can be covered by finitely many 
translates of a ball of radius $
c\eta e^{-(m+n)tN}$. Thus for any $N 
\,> N_0$, the set $Z$ 
can be covered with $\ll C(x) t^{3N} e^{(m+n-
\delta)mntN }$ 
Bowen $N$-balls. Since  $\mu(Z)>\varepsilon'>0$, from Lemma~\ref{lem:entropy} it follows that
 \begin{eqnarray*}
  h_{\mu}( g_t)&\leq
&\liminf_{N \rightarrow
\infty}\frac{\log BC(Z,N)}{N} \\
&\leq &  \big(m+n-\mu(
{X\ssm Q})+\varepsilon''\big)mnt+3\log t.
\end{eqnarray*}
Since $\varepsilon''\in \big(0,\mu(
{X\ssm Q})\big)$ is arbitrary and $3\log t/t < \varepsilon$, 
we arrive at
\begin{equation}\label{eqn:entt}
h_\mu(g_1)=\frac{1}{t}h_\mu(g_t)\le \big(m+n-\mu\big(
 {X\ssm Q})\big)mn+\varepsilon.
\end{equation}
This finishes the proof.
 \end{proof}
 
 We end this section by giving the proof of Theorem~\ref{thm:sequence}.
 
 \begin{proof}[Proof of Theorem~\ref{thm:sequence}]
 Let us 
 take  $\varepsilon>0$ and compact subset $Q = Q(\varepsilon)$ as in Theorem~\ref{thm:entropy}. Since, $h_{\mu_k} \ge h$, using Theorem~\ref{thm:entropy} we see that for any $k \in N$
 $$\mu_k(Q) \ge \frac{h}{mn} - (m+n) +1 -\frac{\varepsilon}{mn}.$$
 Pick a compactly supported continuous function $f:X \to [0,1]$ such that $f(x)=1$ on $Q$. Then, $\int f d\mu_k \ge \mu_k(Q)$. Let $\mu$ be a weak$^*$ limit of $(\mu_k)_{\ge 1}$. Then, letting $ k\to \infty $ we see that
 $$\mu(X) \ge \int f d \mu \ge  \frac{h}{mn} - (m+n) +1 -\frac{\varepsilon}{mn}.$$
 To finish the proof we now let $\varepsilon\to 0$.
 \end{proof}
 
 For the rest of the paper our goal is to prove Theorem~\ref{thm:main}.

\section{Estimates for certain integral operators}\label{sec:estimates}

We fix the standard Euclidean structure on $\R^{m+n}$, let $\{e_1,\dots,e_{m+n}\}$ be the standard basis of $\R^{m+n}$ and let $K = \SO(m+n)$ be the group of orietnation-preserving linear isometries of $\R^{m+n}$ (maximal compact in $G$). The main goal of this section is to prove an estimate for averages of certain functions over $K$. We let $dk$ stand for the normalized Haar measure on $K$.

\begin{prop}\label{prop:gaussian}
For any $i \in \{1,\dots,m+n-1\}$ we let $\beta_i=\frac{m}{i}$ if $i \le m$ and $\beta_i=\frac{n}{m+n-i}$ if $i>m$, as defined in \equ{defbeta}. Then there exists $c>0$ (dependent only on $m,n$) such that
$$\int_K \|g_tkv\|^{-\beta_i} \,dk \le c t e^{-mnt}\|v\|^{-\beta_i},$$
for any $t\ge 1$ and any decomposable $v \in \bigwedge^i \R^{m+n}.$
\end{prop}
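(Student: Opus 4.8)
The plan is to reduce the general decomposable vector to a normalized one and then estimate the $K$-average by carefully examining how $g_t$ acts on the Grassmannian of $i$-planes. By homogeneity in $v$ (both sides scale like $\|v\|^{-\beta_i}$), it suffices to prove the inequality for unit decomposable vectors, i.e.\ for $v$ corresponding to a point $W$ in the Grassmannian $\mathrm{Gr}(i,m+n)$, with $\|v\|=1$. So I must bound $\int_K \|g_tkv\|^{-\beta_i}\,dk$ by $cte^{-mnt}$ uniformly over $W\in\mathrm{Gr}(i,m+n)$. Since $K=\SO(m+n)$ acts transitively on the Grassmannian and $dk$ is $K$-invariant, this integral is actually independent of $W$: $\int_K\|g_tkv\|^{-\beta_i}\,dk = \int_{\mathrm{Gr}(i,m+n)}\|g_t W\|^{-\beta_i}\,d\sigma(W)$ where $\sigma$ is the $K$-invariant probability measure on the Grassmannian and $\|g_tW\|$ denotes the norm of the image of a unit decomposable representative. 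So the whole problem is to estimate a single integral over the Grassmannian.

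Next I would set up coordinates adapted to the block structure of $g_t$. Write $\R^{m+n}=V_+\oplus V_-$ with $\dim V_+=m$, $\dim V_-=n$, so that $g_t$ acts as $e^{nt}$ on $V_+$ and $e^{-mt}$ on $V_-$ (consistent with $\det g_t=1$, i.e.\ $mnt-mnt=0$). For a generic $i$-plane $W$, choose an orthonormal basis and measure its position relative to the splitting; the key quantity is how many of the "principal directions" of $W$ lie close to $V_+$ versus $V_-$. Concretely, using the singular values of the projection $W\to V_+$ (equivalently principal angles between $W$ and $V_+$), one gets $\|g_t(w_1\wedge\dots\wedge w_i)\|^2$ as a sum over $i$-subsets $S\subseteq\{1,\dots,m+n\}$ of $e^{2t(n|S\cap\text{top}| - m|S\cap\text{bottom}|)}$ times squared minors; the dominant term when $i\le m$ is the all-top minor with weight $e^{2nti}$, and the worst (smallest) growth is governed by putting as much of $W$ as possible into $V_-$. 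The exponent $\beta_i$ is precisely chosen so that $\beta_i\cdot(\text{best contraction rate of an }i\text{-plane}) = mn$: for $i\le m$ one can push at most\ldots actually an $i$-plane with $i \le m$ can be pushed entirely into $V_+$ (giving growth $e^{nti}$, which is expansion), but the relevant estimate is an average, and the $K$-average picks up the decay $e^{-mnt}$ from the measure of planes that align with the expanding directions being small. I would make this precise by an explicit computation in a Schubert-cell / Iwasawa-type coordinate chart on the open dense cell of the Grassmannian, parametrizing $W$ as the graph of a linear map and computing both $\|g_tW\|$ and the density of $\sigma$ in these coordinates.

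The main obstacle will be the explicit integral estimate in coordinates: after writing $W$ as a graph, $\|g_tW\|^{-\beta_i}$ becomes a rational function of the graphing matrix entries raised to the power $\beta_i/2$, and the invariant measure $d\sigma$ has a Jacobian factor; I must show the resulting integral over the matrix space converges and yields exactly the exponential rate $e^{-mnt}$ with at most a polynomial (in fact linear, the factor $t$) loss. The linear factor $t$ is the tell-tale sign that the estimate is borderline — the integrand has a logarithmically divergent "resonant" contribution at the critical exponent, exactly as in the analogous estimates of Eskin--Margulis--Mozes \cite{EMM}. So the strategy is: (i) reduce to unit $v$ by homogeneity; (ii) identify the integral with one over $\mathrm{Gr}(i,m+n)$, independent of the base plane; (iii) compute in graph coordinates on the big Schubert cell; (iv) perform the resulting integral, splitting the domain according to which block of variables is large, tracking the critical case that produces the factor $t$. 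I expect step (iv), and in particular isolating the single logarithmically-divergent direction that forces the linear-in-$t$ factor, to be the technical heart of the argument; the case split $i\le m$ versus $i>m$ is handled symmetrically by the duality $\bigwedge^i\R^{m+n}\cong\bigwedge^{m+n-i}\R^{m+n}$ exchanging the roles of $m,n$, which is why $\beta_i$ has its piecewise form.
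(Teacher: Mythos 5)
Your steps (i)--(ii) and the duality step for $i>m$ coincide with what the paper does: both arguments use homogeneity plus transitivity of $K$ on unit decomposable wedges to see that $\int_K\|g_tkv\|^{-\beta_i}dk=C(t)\|v\|^{-\beta_i}$, and both handle $i>m$ by the Hodge star, exchanging the roles of $m$ and $n$. Where you genuinely diverge is in how $C(t)$ is estimated. The paper never writes the invariant measure on the Grassmannian in Schubert/graph coordinates; instead it randomizes: taking $x_1,\dots,x_i$ independent standard Gaussians in $\R^{m+n}$, one has $C(t)=\mathbb{E}\bigl(\|g_t(x_1\wedge\cdots\wedge x_i)\|^{-\beta_i}\bigr)/\mathbb{E}\bigl(\|x_1\wedge\cdots\wedge x_i\|^{-\beta_i}\bigr)$, and the key point is that the projections $\pi_u^{(1)}(x_j)$ to the expanding subspace are again independent standard Gaussians in $\R^m$, so the bound $\|g_tv\|\ge e^{int}\|\pi_u^{(i)}v\|$ reduces everything to negative moments of Gaussian wedges, which are finite exactly when the exponent is $<m-i+1$ (an elementary tail estimate). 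This buys a coordinate-free treatment of the near-degenerate locus and makes it transparent that only $i=1$ and $i=m$ are borderline, since $m/i<m-i+1$ strictly for $1<i<m$.

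The problem with your write-up is that step (iv), which you yourself identify as the technical heart, is only announced, and that is where the proposition actually lives. To complete your route you would need: (a) the two-sided lower bound $\|g_tv\|\ge\max\bigl(e^{int}\|\pi_+^{(i)}v\|,\,e^{-imt}\|v\|\bigr)$ or its coordinate analogue; (b) a proof that $\int_{\mathrm{Gr}(i,m+n)}\|\pi_+^{(i)}W\|^{-\beta_i}\,d\sigma(W)<\infty$ whenever $\beta_i<m-i+1$, which already yields $C(t)\ll e^{-mnt}$ with no factor $t$ for $1<i<m$; and (c) for the critical cases $i\in\{1,m\}$, a splitting of the Grassmannian at the scale $\|\pi_+^{(i)}W\|\asymp e^{-\kappa t}$, where the large-projection part produces the linear factor $t$ (the logarithmic divergence you anticipate) and the small-projection part is controlled by the crude bound $e^{-imt}\|v\|$ together with the fact that the degenerate set has measure $\asymp e^{-\kappa t(m-i+1)}$, for $\kappa$ a large constant. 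None of this quantitative structure appears in your outline, and one of your guiding heuristics is off: the decay $e^{-mnt}$ does not come from ``the measure of planes aligned with the expanding directions being small'' --- generic $i$-planes (for $i\le m$) do align with $V_+$ and are expanded at rate $e^{int}$, so the decay is simply $e^{-int\beta_i}=e^{-mnt}$ on the generic set, and the measure-theoretic work concerns only the exceptional nearly degenerate planes. So the plan is viable in principle, but as it stands there is a gap precisely at the estimate the proposition asserts.
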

We note that except for {$i=1$, $m$ and $m+n-1$} the factor $t$ in the right hand side is not necessary. 

We need some lemmas. 

\begin{lem}\label{lem:lessinfty}
Let $i\in \{1,\dots,d\}$ and let $x_1,\dots,x_i$ be independent $K$-invariant standard Gaussian random variables on $\R^d$. Then
$$\mathbb E(\|x_1\wedge \cdots \wedge x_i\|^{-\beta})< \infty$$
for $\beta<d-i+1$.
\end{lem}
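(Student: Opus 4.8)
The plan is to reduce the finiteness of $\mathbb E(\|x_1\wedge\cdots\wedge x_i\|^{-\beta})$ to a local integrability question near the variety where $x_1,\dots,x_i$ become linearly dependent, and then resolve that by a dimension count. First I would observe that the integrand is bounded away from the locus $\{x_1\wedge\cdots\wedge x_i=0\}$, and the Gaussian density has Gaussian decay at infinity, so the only possible source of divergence is the behavior of $\|x_1\wedge\cdots\wedge x_i\|^{-\beta}$ near that locus. Thus it suffices to show $\int_{\|x\|\le 1}\|x_1\wedge\cdots\wedge x_i\|^{-\beta}\,dx_1\cdots dx_i<\infty$ with respect to Lebesgue measure on $(\R^d)^i$.

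The key computation is to understand $\|x_1\wedge\cdots\wedge x_i\|$ geometrically: by the base-times-height formula,
\[
\|x_1\wedge\cdots\wedge x_i\| = \|x_1\|\cdot \dist(x_2,\Span(x_1))\cdots \dist\big(x_i,\Span(x_1,\dots,x_{i-1})\big).
\]
So I would integrate iteratively, conditioning on $x_1,\dots,x_{j-1}$ and integrating over $x_j$: the factor contributed is $\dist(x_j, V_{j-1})^{-\beta}$ where $V_{j-1}$ is the (at most $(j-1)$-dimensional) span of the earlier vectors. Writing $x_j = w + y$ with $w\in V_{j-1}$ and $y\in V_{j-1}^\perp$, the relevant integral over $y$ is $\int_{\|y\|\le 1,\, y\in V_{j-1}^\perp}\|y\|^{-\beta}\,dy$, which over an orthogonal complement of dimension at least $d-(j-1)$ converges precisely when $\beta < d-(j-1)$. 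The worst (largest) constraint occurs at $j=i$, giving $\beta < d-i+1$, which is exactly the hypothesis. One should be slightly careful that $V_{j-1}$ may have dimension strictly less than $j-1$ on a measure-zero set, but that only makes the complement bigger and the integral easier, so it causes no problem; I would handle this by noting the generic case has full-dimensional span and the exceptional set is null.

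The main obstacle — though a mild one — is making the iterated integration rigorous: one needs Fubini/Tonelli (legitimate since the integrand is nonnegative), and one needs the orthogonal decomposition $x_j = w+y$ to be carried out measurably as $V_{j-1}$ varies, together with the fact that the Gaussian (or Lebesgue) measure disintegrates nicely under the splitting $\R^d = V_{j-1}\oplus V_{j-1}^\perp$. Alternatively, and perhaps more cleanly, one can use $K$-invariance to fix $V_{j-1}$ to be a coordinate subspace at each stage, reducing to a single elementary polar-coordinate estimate $\int_{\|y\|\le 1}\|y\|^{-\beta}\,dy<\infty$ in dimension $\ge d-i+1$. I expect the $K$-invariance route to give the shortest write-up.
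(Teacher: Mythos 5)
Your argument is correct and rests on the same core computation as the paper's: the base-times-height factorization $\|x_1\wedge\cdots\wedge x_i\|=\prod_{j}\|\pi_{\perp,j-1}x_j\|$ together with the fact that, conditionally on $x_1,\dots,x_{j-1}$, the component of $x_j$ orthogonal to their span is a standard Gaussian in dimension at least $d-j+1\ge d-i+1>\beta$. The difference is only in packaging: you integrate the negative power directly by Tonelli, one conditional integral at a time, whereas the paper estimates the two-sided shell probabilities ${\rm Prob}(\|x_1\wedge\cdots\wedge x_i\|\asymp e^{-\ell})\asymp e^{-(d-i+1)\ell}$, i.e.\ \eqref{eqn:probell}, and sums over $\ell$. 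Your route is a perfectly adequate (and arguably more elementary) proof of the lemma as stated, but the paper's formulation buys more: the sharp estimate \eqref{eqn:probell} is reused in Corollary~\ref{cor:prob} and Lemma~\ref{lem:im}, and a pure finiteness statement would not suffice there. Two small points of care in your write-up: the preliminary reduction to the Lebesgue integral over $\{\|x\|\le 1\}$ is not automatic as phrased, since the degenerate locus is an unbounded cone and the integrand is large near it at every scale; one can justify the reduction by homogeneity of the wedge and dyadic annuli, or simply skip it, because your conditional Gaussian integrals already handle the region $\dist(x_j,V_{j-1})\ge 1$ trivially (the singular factor is at most $1$ there). Also, as you note, rotation invariance of the Gaussian lets you take $V_{j-1}$ to be a coordinate subspace at each stage, which disposes of the measurability/disintegration issue.
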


We will be applying the lemma both for $d=m$ and $d=m+n$.

\begin{proof}
Let $\pi_{\perp,j}$ denote the orthogonal projection to the space perpendicular to $x_1,\dots,x_j$. Then,
for $\ell \in \N$  
\begin{equation*}
{\rm Prob} (\|x_1\wedge \cdots \wedge x_i\|  \asymp e^{-\ell})\asymp \sum_{0\le \ell_1,\ell_2,\dots,\ell_i \le \ell} \prod_{j=1}^i {\rm Prob}(\|\pi_{\perp,j-1} x_j\| \asymp e^{-\ell_j}), 
\end{equation*}
where $\ell_1,\dots,\ell_i$ run through integers with $\sum \ell_j =\ell$. Now, $$ {\rm Prob}\big(\|\pi_{\perp,j-1} x_j\| \asymp e^{-\ell}\big)$$ is the probability a standard Gaussian in $\R^{d-j+1}$ has size $\asymp e^{-\ell}$, which is $e^{-(d-j+1)\ell}$. It is now easy to conclude that 
\begin{equation}\label{eqn:probell}
{\rm Prob} (\|x_1\wedge \cdots \wedge x_i\|  \asymp e^{-\ell}) \asymp e^{-(d-i+1)\ell},
\end{equation}
and the lemma follows.
\end{proof}

\begin{cor}\label{cor:prob}
Let $i\in\{1,\dots,d\}$ and let $A$ be some event depending on $x_1,\dots,x_i$, with $x_1,\dots,x_i$ standard Gaussians as before. Take  $\beta<d-i+1$. Then
$$\mathbb E(\|x_1\wedge \cdots \wedge x_i\|^{-\beta} \mid A) \ll {\rm Prob}(A)^{-\frac{\beta}{d-i+1}}.$$
\end{cor}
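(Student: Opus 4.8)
The plan is to derive Corollary~\ref{cor:prob} from Lemma~\ref{lem:lessinfty} by a standard truncation (layer-cake / dyadic decomposition) argument, exploiting the explicit probability tail estimate \equ{probell}. Write $W := \|x_1 \wedge \cdots \wedge x_i\|$ and set $q := d-i+1$, so that by \equ{probell} we have ${\rm Prob}(W \asymp e^{-\ell}) \asymp e^{-q\ell}$ for $\ell \in \N$. Fix $\beta < q$. The idea is to split the expectation $\mathbb E(W^{-\beta} \mid A)$ according to the size of $W$: for a threshold parameter $L \ge 1$ (to be chosen), bound the contribution of $\{W \ge e^{-L}\}$ by $e^{\beta L}$, and bound the contribution of each dyadic layer $\{W \asymp e^{-\ell}\}$ with $\ell > L$ crudely by $e^{\beta \ell} \cdot {\rm Prob}(W \asymp e^{-\ell} \mid A)$.

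The key point for the tail layers is that conditioning on $A$ can only inflate a probability by the factor ${\rm Prob}(A)^{-1}$, so ${\rm Prob}(W \asymp e^{-\ell} \mid A) \le {\rm Prob}(W \asymp e^{-\ell})/{\rm Prob}(A) \ll e^{-q\ell}{\rm Prob}(A)^{-1}$. Summing the layers gives
\begin{equation*}
\mathbb E(W^{-\beta}\, \mathds 1_{\{W < e^{-L}\}} \mid A) \ll {\rm Prob}(A)^{-1}\sum_{\ell > L} e^{(\beta - q)\ell} \ll {\rm Prob}(A)^{-1} e^{(\beta - q)L},
\end{equation*}
using $\beta < q$ to sum the geometric series. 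Combining with the easy bound $\mathbb E(W^{-\beta}\,\mathds 1_{\{W \ge e^{-L}\}} \mid A) \le e^{\beta L}$, we get $\mathbb E(W^{-\beta} \mid A) \ll e^{\beta L} + {\rm Prob}(A)^{-1} e^{(\beta-q)L}$. Now optimize in $L$: the two terms balance when $e^{qL} \asymp {\rm Prob}(A)^{-1}$, i.e.\ $L = \tfrac1q \log\bigl({\rm Prob}(A)^{-1}\bigr)$ (note $L \ge 0$ since ${\rm Prob}(A) \le 1$, and one checks the choice is $\ge 1$ or else handles small $L$ trivially). With this choice $e^{\beta L} = {\rm Prob}(A)^{-\beta/q}$ and likewise ${\rm Prob}(A)^{-1}e^{(\beta-q)L} = {\rm Prob}(A)^{-\beta/q}$, which yields exactly $\mathbb E(W^{-\beta} \mid A) \ll {\rm Prob}(A)^{-\beta/q} = {\rm Prob}(A)^{-\beta/(d-i+1)}$, as claimed.

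I do not expect a serious obstacle here; the argument is routine. The one point requiring a little care is the passage from the continuous random variable $W$ to the discrete dyadic layers $\{W \asymp e^{-\ell}\}$ — strictly one should work with annuli $\{e^{-\ell-1} \le W < e^{-\ell}\}$ and use $W^{-\beta} \le e^{\beta(\ell+1)}$ on such an annulus, together with the tail bound ${\rm Prob}(W < e^{-\ell}) \ll e^{-q\ell}$ that follows from \equ{probell} by summation. A secondary minor point is ensuring the finiteness (so that all manipulations are legitimate), but that is exactly the content of Lemma~\ref{lem:lessinfty}, which guarantees $\mathbb E(W^{-\beta}) < \infty$ for $\beta < d-i+1$; in particular the unconditional expectation is finite, so no convergence issues arise when we restrict to the event $A$.
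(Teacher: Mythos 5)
Your argument is correct, and it takes a somewhat different route from the paper's. The paper first invokes a rearrangement-type extremality observation: among all events $A$ of a given probability $p$, the conditional expectation of $\|x_1\wedge\cdots\wedge x_i\|^{-\beta}$ is largest when $A$ is a sublevel set $\{\|x_1\wedge\cdots\wedge x_i\|\le\sigma\}$ with ${\rm Prob}(A)=p$; it then decomposes that sublevel set into dyadic annuli $A_\ell=\{\|x_1\wedge\cdots\wedge x_i\|\asymp\sigma e^{-\ell}\}$, sums using the tail estimate \eqref{eqn:probell}, and converts $\sigma$ into $p$ via $p\asymp\sigma^{d-i+1}$. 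You keep $A$ arbitrary instead: you bound the contribution of $\{W\ge e^{-L}\}$ trivially by $e^{\beta L}$, bound the small-$W$ contribution by the unconditional dyadic sum inflated by ${\rm Prob}(A)^{-1}$ (using ${\rm Prob}(B\mid A)\le{\rm Prob}(B)/{\rm Prob}(A)$ and $\beta<d-i+1$ to sum the geometric series), and then optimize the threshold $L$, which reproduces exactly the exponent $\beta/(d-i+1)$. What your version buys is that it is fully self-contained: you never need to justify the extremality of sublevel sets (which the paper asserts without proof, though it is a standard monotone-rearrangement fact), only the trivial conditional-probability inflation; the price is the extra optimization-in-$L$ step, whereas the paper's computation is a one-line summation once the extremal form of $A$ is accepted. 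Your side remarks are also in order: the implicit constant may depend on $\beta,d,i$ (as in the paper), the case of $L$ near $0$ (i.e.\ ${\rm Prob}(A)\asymp1$) is trivial, and no integrability issues arise since all quantities are nonnegative (Lemma~\ref{lem:lessinfty} in any case gives finiteness).
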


\begin{proof} 
Observe that from all events $A$ with given ${\rm Prob}(A)=p$,  
$$\mathbb E(\|x_1\wedge \cdots \wedge x_i\|^{-\beta} \mid A)$$
is maximal for $A$ of the form $$A=\{(x_1,\dots,x_i) : \|x_1\wedge \cdots \wedge x_i\| \le \sigma\},$$ where $\sigma$ is chosen so that ${\rm Prob}(A)=p$. Write
\eq{aell}{A_\ell=\{x_1,\dots,x_i : \|x_1\wedge \cdots \wedge x_i\| \asymp \sigma e^{-\ell}\}\,;}
then $A=\cup_{\ell=0}^\infty A_\ell$ for an appropriate (uniform in $\ell$) choice of implicit constants in \equ{aell}.
Thus, using \eqref{eqn:probell} we get that
\begin{align*}
\mathbb E(\|x_1\wedge \cdots \wedge x_i\|^{-\beta} \mid A) 
&\ll \sum_{\ell=0}^\infty \mathbb E(\|x_1\wedge \cdots \wedge x_i\|^{-\beta} \mid A_\ell) \frac{\mathrm{Prob}(A_\ell)}{p}\\
&\ll p^{-1}\sum_{\ell=0}^\infty (\sigma e^{-\ell})^{-\beta} (\sigma e^{-\ell})^{d-i+1}  \ll p^{-1}\sigma^{d-i+1-\beta}.
\end{align*}
Using \eqref{eqn:probell} one more time we see that $p = \mathrm{Prob}(A) \asymp \sigma^{{d-i+1}}$ so that
$$\mathbb E(\|x_1\wedge \cdots \wedge x_i\|^{-\beta} \mid A) \le {\rm Prob}(A)^{-\frac{\beta}{d-i+1}}. $$
\end{proof}

For later purposes we also need the following weaker estimate for $\beta=d-i+1$. In this case we have

\begin{lem}\label{lem:im}
For $i\in \{1,\dots,d\}$ and $\kappa>0$, we have that
$$\mathbb E\left(\|x_1\wedge \cdots \wedge x_i\|^{-(d-i+1)}  \mathds{1}(\|x_1\wedge \cdots \wedge x_i\| > e^{-\kappa})\right) \ll \kappa,$$
where the implicit constant is independent of $\kappa$.
\end{lem}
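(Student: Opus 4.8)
The plan is to mimic the dyadic decomposition used in the proof of Corollary~\ref{cor:prob}, but now keeping track of the logarithmic divergence that appears precisely at the critical exponent $\beta = d-i+1$. Writing $\|x_1\wedge\cdots\wedge x_i\| \asymp e^{-\ell}$, the estimate \eqref{eqn:probell} gives ${\rm Prob}(\|x_1\wedge\cdots\wedge x_i\| \asymp e^{-\ell}) \asymp e^{-(d-i+1)\ell}$, so on the event where this quantity is of order $e^{-\ell}$ the integrand $\|x_1\wedge\cdots\wedge x_i\|^{-(d-i+1)}$ is of order $e^{(d-i+1)\ell}$, and the product of integrand times probability is $\asymp 1$ for each dyadic scale. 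The indicator $\mathds{1}(\|x_1\wedge\cdots\wedge x_i\|>e^{-\kappa})$ restricts us to scales $\ell \lesssim \kappa$, so summing a bounded contribution over $O(\kappa)$ scales yields the bound $\ll \kappa$.

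Concretely, first I would fix the implicit constants and decompose, for $\ell \ge 0$,
\[
A_\ell := \{(x_1,\dots,x_i) : \|x_1\wedge\cdots\wedge x_i\| \asymp e^{-\ell}\},
\]
chosen so that $\{\|x_1\wedge\cdots\wedge x_i\| > e^{-\kappa}\} \subset \bigcup_{\ell=0}^{\lceil\kappa\rceil} A_\ell$ up to adjusting constants, and so that on $A_\ell$ one has $\|x_1\wedge\cdots\wedge x_i\|^{-(d-i+1)} \ll e^{(d-i+1)\ell}$. Then
\[
\mathbb E\!\left(\|x_1\wedge\cdots\wedge x_i\|^{-(d-i+1)}\,\mathds{1}(\|x_1\wedge\cdots\wedge x_i\| > e^{-\kappa})\right)
\ll \sum_{\ell=0}^{\lceil\kappa\rceil} e^{(d-i+1)\ell}\,{\rm Prob}(A_\ell)
\ll \sum_{\ell=0}^{\lceil\kappa\rceil} e^{(d-i+1)\ell}\, e^{-(d-i+1)\ell}
\ll \kappa + 1.
\]
Since only $\kappa \gtrsim$ a fixed constant is of interest (for small $\kappa$ the left-hand side is anyway bounded by an absolute constant times $\kappa$ after noting the integrand times the full probability of the region is controlled, or one simply absorbs the additive constant), this gives the claimed $\ll \kappa$.

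The only genuine subtlety — which I would take care to state cleanly rather than compute in detail — is the uniformity of the implicit constants in \eqref{eqn:probell} over the dyadic scales $\ell$, i.e. that the comparison ${\rm Prob}(\|x_1\wedge\cdots\wedge x_i\|\asymp e^{-\ell}) \asymp e^{-(d-i+1)\ell}$ holds with constants independent of $\ell$; this is exactly the content already established in the proof of Lemma~\ref{lem:lessinfty} via the chain of orthogonal projections $\pi_{\perp,j-1}$, so I can invoke it directly. The other minor point is handling the additive "$+1$": for $\kappa$ bounded away from $0$ it is absorbed into the constant, and for small $\kappa$ one observes that the event $\{\|x_1\wedge\cdots\wedge x_i\|>e^{-\kappa}\}$ forces $\|x_1\wedge\cdots\wedge x_i\|$ to be close to its typical range, on which the integrand contributes $O(\kappa)$ after the same scale-counting. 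I do not anticipate any real obstacle here; the lemma is essentially a bookkeeping refinement of Corollary~\ref{cor:prob} at the endpoint exponent.
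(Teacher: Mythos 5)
Your proof is essentially the paper's own argument: decompose into the shells $A_\ell=\{\|x_1\wedge\cdots\wedge x_i\|\asymp e^{-\ell}\}$, use \eqref{eqn:probell} so that each shell contributes $O(1)$ to the expectation, and sum over the $\lesssim \kappa$ shells permitted by the indicator. The only divergence is your worry about small $\kappa$, which the paper simply ignores (it implicitly takes $\kappa\gtrsim 1$, as in the application where the parameter is a large constant times $t\ge 1$); note that your proposed patch there is not actually correct --- as $\kappa\to 0$ the left-hand side stays bounded below by a positive constant, so the clean statement is $\ll 1+\kappa$ --- but this regime is irrelevant to how the lemma is used.
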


\begin{proof}
Let $A_\ell$ be as in \equ{aell}.
Again using the estimate~\eqref{eqn:probell} we see that
\begin{align*}
\mathbb E\bigl(\| x_1 \wedge \cdots \wedge x_i \|^{-(d-i+1)} & \mathds{1}(\| x_1 \wedge \cdots \wedge x_i \| > e^{-\kappa})\bigr) 
\asymp \\ \asymp& \sum_ {\ell = 0} ^ \kappa \mathbb E(\| x_1 \wedge \cdots \wedge x_i \|^{-(d-i+1)} \mid A_\ell ) \mathrm{Prob}(A_\ell)\\
\asymp &\sum_ {\ell = 0} ^ \kappa 1 = \kappa
.\end{align*}
\end{proof}

\begin{proof}[Proof of Proposition~\ref{prop:gaussian}]
We assume that $i \le m$, and will deal with the other case by duality.

Consider
$$I_t(v)=\int_K \|g_t k v\|^{-\beta_i} \,dk,$$
for some decomposable $v=x_1\wedge \cdots \wedge x_i$. Since $K$ acts transitively on the 
variety of the decomposable wedges 
up to homothety, we have 
$$I_t(v)=C(t) \|v\|^{-\beta_i}$$
for some function $C(t)$.
It follows that if $v=x_1\wedge \cdots \wedge x_i$ with $x_i$ independent ($K$-invariant) standard Gaussians and $\beta_i<m+n-i+1$ (which is certainly true in our choice of $\beta_i$) then
$$C(t)=\frac1C \mathbb E I_t(x_1\wedge \cdots \wedge x_i)=\frac1C \mathbb E(\|g_t (x_1 \wedge \cdots \wedge x_i)\|^{-\beta_i}),$$
where the last equality follows from $K$-invariance and 
$$C=\mathbb E(\|x_1\wedge \cdots \wedge x_i\|^{-\beta_i})$$
is independent of $t$. Thus, to prove the proposition we only need to show that if $x_1,\dots,x_i$ are standard Gaussian random variables then
\begin{equation}\label{eqn:expmnt}
\mathbb E(\|g_t (x_1\wedge \cdots \wedge x_i)\|^{-\beta_i}) \le c t e^{-mnt}\,,
\end{equation}
where $c$ is independent of $t$.

Let $V_u \subset \R^{m+n}$ denote the $m$-dimensional subspace spanned by $e_1,\dots,e_m$, and let $V_s$ be the complementary subspace, so that
$$\|g_t v\|=e^{nt} \|v\| \text{ and } \|g_t w\|=e^{-mt}\|w\|$$
whenever $v \in V_u$ and $w \in V_s$. In particular, for any $v \in \bigwedge^i V_u$ we have $\|g_t v\|=e^{int} \|v\|.$ Let $\pi^{(i)}_u:\bigwedge^i\R^{m+n} \to \bigwedge^i V_u$ be the natural (orthogonal) projection.

Clearly,
$$\pi_{u}^{(i)} (x_1\wedge \cdots \wedge x_i)=\pi_{u}^{(1)}(x_1)\wedge \cdots \wedge \pi_{u}^{(1)}(x_i),$$
and each of $\pi_{u}^{(1)}(x_j)$ is a standard Gaussian random variable in $m$ dimensions. 

To show \eqref{eqn:expmnt} we first assume that $1<i<m$. Clearly, one has 
$$\|g_t(x_1\wedge \cdots \wedge x_i)\| \ge  \|\pi_u^{(i)}g_t (x_1\wedge \cdots \wedge x_i)\|.$$
Then, using Lemma~\ref{lem:lessinfty} with $d=m$ we in fact get
\begin{multline*}
\mathbb E(\|g_t (x_1\wedge \cdots \wedge x_i)\|^{-\beta_i}) \le \mathbb E(\|\pi_u^{(i)} g_t (x_1\wedge \cdots \wedge x_i)\|^{-\beta_i} )\\=e^{-in\beta_i t} \mathbb E(\|\pi_u^{(i)}  (x_1\wedge \cdots \wedge x_i)\|^{-\beta_i} )\ll e^{-mnt},
\end{multline*}
 as $\beta_i=\frac{m}{i}<m-i+1$ for $1<i<m.$

Now, assume that $i=1$ or $i=m$. In this case we need to be a bit subtler, since $\beta_i=m-i+1$. However, we note that
$$\|g_t(x_1\wedge \cdots \wedge x_i)\| \ge \max \left(\|g_t\pi_u^{(i)}(x_1\wedge \cdots \wedge x_i)\|, e^{-imt} \|x_1\wedge \cdots \wedge x_i\|\right).$$
Hence, for any $\kappa>0$ 
$$\mathbb E(\|g_t (x_1\wedge \cdots \wedge x_i)\|^{-\beta_i}) \le E_1+E_2\,,$$
where 
$$E_1=\mathbb E\left(\|g_t\pi_u^{(i)} (x_1\wedge \cdots \wedge x_i)\|^{-\beta_i}{\mathds 1}(\|\pi_u^{(i)}(x_1\wedge \cdots \wedge x_i)\|>e^{-\kappa t})\right),$$
and
$$E_2=e^{im\beta_it}\mathbb E\left(\|x_1\wedge \cdots \wedge x_i\|^{-\beta_i}\mathds{1}(\|\pi_u^{(i)}(x_1\wedge \cdots \wedge x_i)\|<e^{-\kappa t})\right).$$
From Lemma~\ref{lem:im} for $d=m$ we get
\begin{align*}
E_1&=e^{-in\beta_i t}\mathbb E\left(\|\pi_u^{(i)} (x_1\wedge \cdots \wedge x_i)\|^{-\beta_i}\mathds 1(\|\pi_u^{(i)}(x_1\wedge \cdots \wedge x_i)\|>e^{-\kappa t})\right)\\
 &\ll \kappa t e^{-in\beta_i t}= \kappa t e^{-mnt}.
\end{align*}

To estimate $E_2$, write $$A=\{(x_1,\dots,x_i) : \|\pi_u^{(i)} (x_1\wedge \cdots \wedge x_i)\| < e^{-\kappa t}\}$$ and recall  that ${\rm Prob}(A) \asymp e^{-\kappa t (m-i+1)}.$ Hence, using Corollary~\ref{cor:prob} for $d=m+n$ we conclude
$$
E_2=e^{im\beta_it} {\rm Prob}(A) \mathbb E(\|x_1\wedge \cdots \wedge x_i\|^{-\beta_i} \mid A )
\le  \begin{cases} e^{m^2t} e^{-\kappa mt} e^{\frac{\kappa m^2 t}{m+n}} & \textrm{ if $i=1,$} \\ e^{m^2 t}e^{-\kappa t}e^{\frac{\kappa t}{n+1}}
& \textrm{ if $i=m$}.\end{cases}
$$
Either way, if $\kappa$ is a sufficiently large constant (depending on $n$ and $m$) then $E_2 \ll e^{-mnt}.$

This concludes the proof of the proposition for $i \le m$. 

For $i>m$ we exploit duality. Define the linear map
$$*:\bigwedge^j \R^{m+n} \to \bigwedge^{m+n-j} \R^{m+n} \text{ by } *(\wedge_{i \in I} e_i)=\wedge_{i \not\in I} e_i.$$
Then, 
$$*(k v)=k (*v), *(g_t v)=g_{-t} (*v), \text{ and } \|*v\|=\|v\|. $$
Therefore,
$$\int_U \|g_t k v\|^{-\beta} \,dk=\int_{K} \|*g_t k v\|^{-\beta} \,dk=\int_{K}  \|g_{-t} k (*v)\|^{-\beta}\,dk.$$
and the desired estimate follows by applying the above for $j'=m+n-j$,  $n'=m$ and $m'=n$.
\end{proof}

We now show that Proposition \ref{prop:gaussian} will remain valid if integration over $K$ is replaced with integration over a bounded subset of $U$ (with the constant dependent on that subset). 

\begin{lem}\label{lem:KUasymp} There exists a neighborhood $V$ of identity in $M_{m,n}$ such that for any $s_0 \in M_{m,n}$, $t,\beta>0$, $i \in \{1,\cdots,m+n-1\}$, and decomposable $w \in \bigwedge^i \R^{m+n}$ we have
$$\int_{V+s_0} \| g_t u_s w \|^{-\beta}\,ds \ll (1+\| s_0 \|)^{\beta} \int_K \| g_t k w \|^{-\beta} \,dk$$
with the implied constant dependent only on $m$, $n$ and $\beta$.
\end{lem}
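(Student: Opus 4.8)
The plan is to reduce the integral over the translated neighborhood $V+s_0$ of $U$ to an integral over $K$ via the transitivity of $K$ on decomposable wedges up to scaling, just as in the proof of Proposition~\ref{prop:gaussian}. First I would recall that, since $K$ acts transitively on the set of decomposable $i$-vectors of a fixed norm, there is a function $C(t)$ (depending also on $\beta$ and $i$) with $\int_K \|g_t k w\|^{-\beta}\,dk = C(t)\|w\|^{-\beta}$ for every decomposable $w\in\bigwedge^i\R^{m+n}$. So it suffices to bound $\int_{V+s_0}\|g_t u_s w\|^{-\beta}\,ds$ by a constant multiple of $(1+\|s_0\|)^\beta C(t)\|w\|^{-\beta}$.

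The key step is a comparison between the ``shapes'' of $u_s$ acting on a decomposable wedge and $k$ acting on it. Concretely I would use the Iwasawa-type (or just $QR$-type) decomposition: for $s$ in a bounded neighborhood, write $u_s = k_s a_s n_s$ with $k_s\in K$, $a_s$ diagonal positive, $n_s$ upper triangular unipotent, all varying smoothly in $s$; alternatively, and more robustly, simply estimate $\|g_t u_s w\|$ from below by $c(s)\,\|g_t k_s' w\|$ for a suitable $k_s'\in K$, where $c(s)$ is controlled. The cleanest route, however, is: $u_s = a(s)\,k(s)$ is \emph{not} available globally, so instead I would exploit that $g_t u_s = g_t u_s g_{-t}\cdot g_t$ and note $g_t u_s g_{-t} = u_{e^{(m+n)t}s}$ — this grows with $t$, which is bad — so that route fails. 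Instead I would go back to first principles: fix a neighborhood $V$ small enough that the map $s\mapsto u_s$ has image in a fixed compact set $\Omega\subset G$, write $u_s = k(s)\,p(s)$ with $k(s)\in K$ and $p(s)$ in a bounded subset $P$ of the (closed) group $P^0$-like factor, depending smoothly on $s$ for $s$ near $0$; translating by $s_0$ replaces $u_s$ by $u_{s_0}u_{s-s_0}$ and $\|u_{s_0}\|_{\mathrm{op}}\ll 1+\|s_0\|$. Then $\|g_t u_s w\| = \|g_t k(s-s_0)\,u_{s_0}\,p'\, w\|$ for appropriate bounded $p'$, and since $g_t$ commutes with nothing useful here, I instead bound below: $\|g_t k(s-s_0)\, v\| \gg \|g_t k(s-s_0) w\|\cdot (\text{stuff})$ — wait, the honest mechanism is that the operator norm of $(u_{s_0}p')^{-1}$ on $\bigwedge^i$ is $\ll (1+\|s_0\|)^i \le (1+\|s_0\|)^{m+n}$; absorbing the $i$ into the exponent is wasteful, so I would keep track and get exactly $(1+\|s_0\|)^\beta$ by using that $\beta$ appears as the power and that $\|(u_{s_0})^{-1} v\|\le (1+C\|s_0\|)\|v\|$ on \emph{vectors} after passing to an eigenbasis — for wedges this gives $(1+\|s_0\|)^i$, so to get $(1+\|s_0\|)^\beta$ we need $\beta\ge i$; but in our application $\beta=\beta_i\le m$ and $i$ can be large, so that is false.

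So the real argument must be subtler and this is where the main obstacle lies: one must not bound $\|u_{s_0}^{-1}\|$ on all of $\bigwedge^i$, but only along the relevant directions. The resolution is that the left-hand integral is over $s$ ranging over a full neighborhood $V+s_0$, so after the change of variables $s\mapsto s-s_0$ (which is measure preserving) we get $\int_V \|g_t u_{s}u_{s_0} w\|^{-\beta}\,ds$, and now I would absorb $u_{s_0}w$ into a new decomposable vector $w' := u_{s_0} w$ with $\|w'\|\le (1+\|s_0\|)^{?}\|w\|$ — and here is the point: $u_{s_0}$ is \emph{unipotent}, hence $\det = 1$ on each $\bigwedge^i$, but its operator norm is $\ll (1+\|s_0\|)$, giving $\|w'\|\ll(1+\|s_0\|)\|w\|$ \emph{as long as} $i\le\min(m,n)$... no. Actually $\|u_{s_0}\|_{\mathrm{op}}$ on $\R^{m+n}$ is $\ll 1+\|s_0\|$, so on $\bigwedge^i$ it is $\ll (1+\|s_0\|)^{i}$. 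Thus $\|w'\|\ll (1+\|s_0\|)^i\|w\|$ and then $\int_V\|g_tu_sw'\|^{-\beta}ds \ll C(t)\|w'\|^{-\beta}$... but we want an \emph{upper} bound on the integral, and $\|w'\|^{-\beta}$ is \emph{smaller} than $(1+\|s_0\|)^{\beta i}\|w\|^{-\beta}$ — wrong direction again. I would instead bound $\|g_tu_su_{s_0}w\|$ from \emph{below}: $\|g_tu_su_{s_0}w\| = \|g_tu_{s+s_0}w\| \ge \|g_t u_{s'} w\|\cdot\big(\text{lower bound}\big)$ does not simplify. The correct and standard fix, which I would ultimately adopt, is: cover $V+s_0$ by $\ll (1+\|s_0\|)^{mn}$ translates of $V$ (its volume is $\asymp$ that of $V$, but its \emph{diameter} is what matters for the $K$-comparison) — no.

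Given the constraints, I would present the clean version: \textbf{Step 1}, reduce to $s_0=0$ plus a scaling factor by the change of variables $s\mapsto s+s_0$ together with the submultiplicativity $\|g_tu_{s+s_0}w\|\ge \|g_tu_{s_0}\|_{\mathrm{op}}^{-1}\|g_tu_sw\|$ is false since $g_t$ and $u_{s_0}$ don't commute — so instead use $g_t u_{s+s_0} = (g_t u_{s_0} g_{-t})(g_t u_s)$ and note for $s_0$ in a bounded set $g_t u_{s_0} g_{-t} = u_{e^{(m+n)t}s_0}$ has operator norm $\ll 1+e^{(m+n)t}\|s_0\|$, which blows up — this confirms the factor $(1+\|s_0\|)^\beta$ genuinely requires staying on the $U$-side and comparing $U$ to $K$ \emph{before} applying $g_t$. \textbf{Step 2 (the heart):} for $s$ in a small fixed neighborhood $V$ and $\|s_0\|$ arbitrary, write $u_{s+s_0}=k\,r$ with $k\in K$ and $r$ upper-triangular with diagonal entries bounded by $C(1+\|s_0\|)^{\pm 1}$ — this comes from Gram–Schmidt on the columns of $u_{s+s_0}$, whose norms are $\asymp 1+\|s_0\|$ at worst and $\asymp 1$ at best, so the diagonal of $r$ on $\R^{m+n}$ lies in $[c(1+\|s_0\|)^{-n},\,C(1+\|s_0\|)]$ — then on $\bigwedge^i$, $\|r^{-1}\|_{\mathrm{op}}\ll (1+\|s_0\|)^{?}$, and crucially $g_t$ commutes with neither; however $\|g_t u_{s+s_0} w\| = \|g_t k r w\| \ge c(1+\|s_0\|)^{-(m+n)}\|g_t k (rw/\|rw\|)\|\cdot\|rw\|$ and $\|rw\|\ge c(1+\|s_0\|)^{-n}\|w\|$ hmm. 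I expect the main obstacle to be exactly this bookkeeping: getting the power of $(1+\|s_0\|)$ down to precisely $\beta$ rather than $\beta(m+n)$ or similar. I believe this is achieved by noting that $r=a n'$ with $a$ diagonal and $n'$ unipotent, that $g_t$ commutes with $a$ (both diagonal!), so $g_t k r w = g_t k a n' w = k'' a' (g_t n'' w)$ — no, $k$ and $g_t$ don't commute either. The actual trick (used in \cite{EMM}) is: $\|g_t u_{s+s_0}w\| \ge \|g_t k_0 w\|$ for \emph{some} $k_0\in K$ depending on $s,s_0$, times a factor $\gg (1+\|s_0\|)^{-1}$, by realizing $u_{s+s_0}w$ as lying within a $K$-orbit up to the scalar $\|u_{s+s_0}w\|/\|w\| \in [c,C(1+\|s_0\|)]$ and the \emph{direction} being arbitrary, so $\|g_t u_{s+s_0}w\|\ge (\|u_{s+s_0}w\|/\|w\|)\cdot \min_{k}\|g_t k w\|$... but $\min_k\|g_tkw\|$ could be tiny. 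Since this is a plan: \textbf{Step 3}, integrate the resulting pointwise bound over $s\in V$ and $s_0$ fixed; the $s$-integral of $\|g_t k(s) w\|^{-\beta}$ over the small neighborhood $V$ is $\asymp \int_K\|g_tkw\|^{-\beta}dk$ by absolute continuity of the pushforward of Lebesgue measure on $V$ to $K$ under $s\mapsto k(s)$ (bounded density since $s\mapsto k(s)$ is a smooth submersion near $0$), which finally gives the claimed inequality with constant $\ll(1+\|s_0\|)^\beta$. The one genuinely delicate point — controlling the Jacobian of $s\mapsto k(s)$ and ensuring the power of $(1+\|s_0\|)$ is exactly $\beta$ — is what I would write out carefully; everything else is routine.
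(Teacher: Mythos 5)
Your proposal does not reach a proof: the two steps you yourself flag as ``the heart'' and ``the delicate point'' are exactly where it breaks down, and the mechanisms you offer for them do not work. The central issue is how to dominate an integral over a bounded piece of $U$ by the integral over $K$. You propose to write $u_{s+s_0}=k(s)r(s)$ by Gram--Schmidt and to argue that the pushforward of Lebesgue measure on $V$ under $s\mapsto k(s)$ has bounded density with respect to Haar measure on $K$ because this map is ``a smooth submersion near $0$''. That is impossible: $\dim U=mn$ while $\dim K=\binom{m+n}{2}>mn$ whenever $m+n\ge 3$, so $s\mapsto k(s)$ cannot be a submersion and the pushforward is singular with respect to $dk$; no density argument can yield $\int_V\|g_tk(s)w\|^{-\beta}\,ds\ll\int_K\|g_tkw\|^{-\beta}\,dk$. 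The paper supplies precisely the missing idea: a neighborhood of the identity in $K$ is written as $k=p(k)f(k)$ with $p(k)$ in a bounded piece of $P^0$ from \eqref{eqn:weaks} and $f$ a local diffeomorphism onto a neighborhood in $N=N^0U$; one then uses that $g_tp\,g_{-t}$ stays bounded for bounded $p\in P^0$ and that conjugation by $g_t$ fixes $N^0$, so after Fubini the extra $P^0$- and $N^0$-directions cost only bounded factors. This ``fattens'' the $mn$-dimensional $U$-integral into an open subset of $K$ and gives $\int_{\mathcal O_U}\|g_tuw\|^{-\beta}\,du\ll\int_K\|g_tkw\|^{-\beta}\,dk$ for every decomposable $w$. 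Nothing equivalent appears in your plan, and without it the lemma is not proved.

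The treatment of the translate $s_0$ is also left unfinished. The intended chain (as in the paper) is: change variables to get $\int_V\|g_tu_s(u_{s_0}w)\|^{-\beta}\,ds$, apply the $U$-versus-$K$ comparison above to the decomposable vector $u_{s_0}w$, use transitivity of $K$ on decomposable $i$-vectors up to scalars to get $\int_K\|g_tk(u_{s_0}w)\|^{-\beta}\,dk=\|u_{s_0}w\|^{-\beta}\int_K\|g_tkw\|^{-\beta}\,dk$ (for $\|w\|=1$), and finally bound $\|u_{s_0}w\|$ from below by a negative power of $1+\|s_0\|$. You state the change of variables and the transitivity fact, but you never assemble this chain, repeatedly reversing the direction of the needed inequality and ending without committing to any bound. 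Your worry that on $\bigwedge^i\R^{m+n}$ the unipotent $u_{s_0}^{\pm1}$ has operator norm of size $(1+\|s_0\|)^{\min(i,m,n)}$ rather than $1+\|s_0\|$ is a fair one, but it is not the real obstruction: what is needed is only a lower bound $\|u_{s_0}w\|\gg(1+\|s_0\|)^{-c}\|w\|$ for decomposable $w$ with some $c=c(m,n)$, and any such polynomial factor suffices in the only place the lemma is used (Corollary~\ref{lem:main}), where it is absorbed by the Gaussian weight. So the exponent bookkeeping is a side issue; the genuine gap is the absent $U$-to-$K$ comparison.
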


\begin{proof}

We shall make use of the 
groups:
\begin{align*}
N = \left\{ \begin{pmatrix} 1 & * & \dots &*\\
0&1&\dots&*\\
\vdots& &\ddots & \vdots \\
0 & 0 & \dots & 1\end{pmatrix} \right\}  
\quad \text{and}\quad N^0=N \cap P^0,
\end{align*}
where $P^0$ is as in (\ref{eqn:weaks}). Note that $ N=N^0U$ and $U \lhd N$.

There is a local diffeomorphism $f: K \to N $ such that in a neighborhood of identity $\mathcal{O}$ in $K$ we may write $k=p(k) f(k)$ for some $p(k) \in P^0$, with the Jacobian of $f$ bounded from above and below in this neighborhood.
Suppose $\mathcal{O} _ U, \mathcal{O} _ {N^0}$ are neighborhood of identity in $U$ and $N ^ 0$ respectively so that $ \mathcal{O} _ {N^0}\mathcal{O} _ U \subset f (\mathcal{O})$.

Then
\begin{align*}
\int_K \| g_t k w \|^{-\beta} \,dk &
\ge \int_{\mathcal{O}} \| g_t p(k)f(k)w \|^{-\beta} \,dk\\
& \ge \int_{\mathcal{O}} \left\|g_{t} p(k)^{-1} g_{-t}\right\|^{-\beta} \| g_t f(k) w\|^{-\beta} \,dk\\
& \gg \int_{\mathcal{O}_{N^0}}\int_{\mathcal{O}_U} \|g_t n_0 u w \|^{-\beta} \,dn_0du \\
&= \int_{\mathcal{O}_{N^0}}\int_{\mathcal{O}_U} \| n_0 g_t u w \|^{-\beta} \,dn_0du \\
&\gg \int_{\mathcal{O}_U} \|g_t u w \|^{-\beta} \,du
\end{align*}

We may assume that $\| w \|=1.$ For any $u \in U$ we may find $k \in K$ such that $kuw=\pm \| uw \| w$. Also, for any $s \in M_{m,n}$, 
\[\| u_s w \| \ge \| u_s^{-1}\|^{-1}\ge(1+\| s \|)^{-1}.\]
Setting $V=\{s \in M_{m,n}: u_s \in \mathcal{O}_U\}$ we have that for any $s_0 \in M_{m,n}$
\begin{multline*}
\int_{V+s_0}\| g_t u_s w \|^{-\beta} ds=\int_{V}\| g_t u_{s} u_{s_0} w \|^{-\beta} ds \ll \int_K \| g_t k (u_{s_0} w)\|^{-\beta}\,dk \\
=\| u_{s_0}w \|^{-\beta} \int_K \| g_t k w \|^{-\beta}\,dk \le (1+\| s_0 \|)^{\beta} \int_K \| g_t k w \|^{-\beta} \,dk.
\end{multline*}
\end{proof}

Recall that in Remark  \ref{defining alpha} we defined $\alpha_i(x)$ to be the maximum value of $1/\|L\|$  where $\|L\|$ is the volume of $L/(L \cap x)$ and $L$ runs through the set  $F_i(x)$  of all $i$-dimensional subgroups 
of $x$. Clearly, $\alpha_{m+n}(x)=1$, and for convenience we also set $\alpha_0(x)=1$ for all $x \in X$. 

 In the next corollary we  replace the integration over a neighborhood of identity in $U$ with the  integration over all of $U$ with respect to $d\rho$, which will denote the  Gaussian probability measure  on $M_{m,n}$ where each component is i.i.d.\ with mean $0$ and variance $1$.
Using Proposition~\ref{prop:gaussian} and Lemma~\ref{lem:KUasymp} we 
argue as in \cite[Lemma 5.7]{EMM} to obtain the following corollary:

\begin{cor}\label{lem:main} Let $\{\beta_i: i = 1,\dots,m+n-1\}$ be as in 
Proposition~\ref{prop:gaussian}. Then there exists $c_0 > 0 $ with the following property: given any 
 $t\ge 1$ one can choose $\omega >0$ such that for any $x\in X$ and $i \in \{1,\dots,m+n-1\}$ one has
\begin{multline*}
\int_{U}\alpha_i(g_t u_s x)^{\beta_i} \,d\rho(s) \le c_0 t e^{-mnt} \alpha_i(x)^{\beta_i}\\+\omega^{2\beta_i} \max_{0 < j \le \min\{m+n-i,i\}} \left(\sqrt{\alpha_{i+j}(x) \alpha_{i-j}(x)}\right)^{\beta_i}\,.
\end{multline*}
\end{cor}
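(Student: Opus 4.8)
The plan is to follow the strategy of Lemma 5.7 in \cite{EMM}, adapting their argument for the function $\alpha_i$ to our semigroup $\{g_t\}$ and horospherical subgroup $U$. First I would recall the geometric meaning of $\alpha_i(g_t u_s x)$: it is $\max\{1/\|L'\| : L' \in F_i(g_t u_s x)\}$, and each such $L'$ corresponds to a primitive subgroup $L \in F_i(x)$ via $L' = g_t u_s L$, with $\|L'\| = \|g_t u_s v_L\|$ where $v_L = v_1 \wedge \dots \wedge v_i$ is a unit-normalized decomposable vector representing $L$ (up to the factor $\|L\|$). Thus
$$\alpha_i(g_t u_s x)^{\beta_i} = \max_{L \in F_i(x)} \frac{1}{\|L\|^{\beta_i}} \cdot \frac{\|L\|^{\beta_i}}{\|g_t u_s v_L\|^{\beta_i}} \quad \text{(suitably interpreted)},$$
so the integral $\int_U \alpha_i(g_t u_s x)^{\beta_i}\,d\rho(s)$ is bounded by a sum/supremum over lattice subgroups of integrals of the form $\int_U \|g_t u_s w\|^{-\beta_i}\,d\rho(s)$ for decomposable $w$.

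The key mechanism, exactly as in \cite{EMM}, is a dichotomy based on how many subgroups $L$ are "small". One partitions $U$ (or rather, one argues pointwise in $s$): for a given $s$, either the maximizing subgroup $L$ achieving $\alpha_i(g_t u_s x)$ is essentially unique/isolated among subgroups of comparable norm, in which case one can control $\|g_t u_s v_L\|^{-\beta_i}$ by comparing with the single subgroup $L_0$ maximizing $\alpha_i(x)$ and integrating using the Gaussian estimate from Proposition~\ref{prop:gaussian} together with Lemma~\ref{lem:KUasymp} to pass from $\int_K$ to $\int_U d\rho$ — this yields the first term $c_0 t e^{-mnt}\alpha_i(x)^{\beta_i}$. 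Or there are two distinct $i$-dimensional subgroups $L_1 \neq L_2$ of $g_t u_s x$ both of small covolume; then taking $L_1 \cap L_2$ and $L_1 + L_2$ (which are subgroups of $g_t u_s x$ of dimensions $i-j$ and $i+j$ for some $j$ with $0 < j \le \min\{i, m+n-i\}$) and using the elementary inequality $\|L_1\|\cdot\|L_2\| \ge \|L_1 \cap L_2\|\cdot\|L_1+L_2\|$ (submodularity of covolume, cf. the analogous estimate in \cite{EMM}) gives $\alpha_i(g_t u_s x)^2 \ll \alpha_{i-j}(g_t u_s x)\alpha_{i+j}(g_t u_s x)$, hence after taking square roots and using that $\alpha_{i\pm j}(g_t u_s x) \le e^{(m+n)t}\alpha_{i\pm j}(x)$ trivially (translation by $g_t u_s$ changes covolumes by a bounded-exponent factor), one obtains the second term with an appropriate choice of $\omega = \omega(t)$ absorbing these crude exponential factors.

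More precisely, I would fix a threshold: call $L \in F_i(g_t u_s x)$ \emph{$\omega$-small} if $\|L\| < \omega^{-1}$, where $\omega$ is to be chosen (depending on $t$) at the end. If at the point $g_t u_s x$ there is at most one $\omega$-small $i$-subgroup, then $\alpha_i(g_t u_s x)^{\beta_i}$ is either $\le \omega^{\beta_i}$ (harmless, absorbed in the second term) or equals $\|g_t u_s v_{L_0}\|^{-\beta_i}$ for the unique small $L_0$, which is one of the $L \in F_i(x)$; integrating over $s$ and summing over the (at most a bounded number near each $s$, but a priori infinitely many globally) candidate subgroups requires the standard covering/counting argument from \cite{EMM} showing that only boundedly many $L \in F_i(x)$ can become $\omega$-small for $s$ in a fixed Gaussian-typical region, giving the $c_0 t e^{-mnt}\alpha_i(x)^{\beta_i}$ bound via Proposition~\ref{prop:gaussian} and Lemma~\ref{lem:KUasymp} (the $(1+\|s_0\|)^{\beta_i}$ factor is integrable against $d\rho$). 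If there are two or more $\omega$-small subgroups, apply the submodularity dichotomy above. Finally choose $\omega$ large enough (as a function of $t$) that $\omega^{2\beta_i}$ dominates the crude exponential losses in the two-subgroup case, while still being a fixed function of $t$ as the statement allows.

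The main obstacle I expect is the bookkeeping in the "unique small subgroup" case: controlling the sum over all $L \in F_i(x)$ of $\int_U \|g_t u_s v_L\|^{-\beta_i}\,d\rho(s)$, which is genuinely infinite term-by-term and must be organized so that, on the region where $L$ is the unique small subgroup, the contributions do not overlap and the total is governed by the single largest term $\alpha_i(x)^{\beta_i}$ times the Gaussian decay $c_0 t e^{-mnt}$. This is precisely where one must reproduce the geometry-of-numbers argument of \cite[Lemma 5.7]{EMM} — the observation that two distinct subgroups cannot both be small simultaneously forces a partition of the relevant part of $U$ indexed by subgroups, and on each piece the relevant integral is a translate (by $u_{s_0}$) of the basic integral estimated in Lemma~\ref{lem:KUasymp}, with $\|s_0\|$ controlled by how far one has moved from the identity. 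Everything else — the dualization $i \leftrightarrow m+n-i$ already handled in Proposition~\ref{prop:gaussian}, the passage from $K$ to $U$, the submodularity inequality — is routine given what is proved above.
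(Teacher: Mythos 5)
Your proposal uses the right family of ideas (the \cite{EMM} dichotomy, submodularity of covolumes, Proposition~\ref{prop:gaussian} combined with Lemma~\ref{lem:KUasymp}), but the way you place the dichotomy leaves a genuine gap that your sketch does not close. You test, for each $s$ separately, whether the translated lattice $g_t u_s x$ has one or several $\omega$-small $i$-dimensional subgroups, and in the ``unique small subgroup'' case you must then sum, over all $L \in F_i(x)$ that become the unique small subgroup for some $s$, the integrals of $\|g_t u_s v_L\|^{-\beta_i}$ over the corresponding pieces of a partition of $U$. You correctly identify this bookkeeping as the main obstacle, and it is fatal as sketched: the number of subgroups of $x$ that can become $\omega$-small for $s$ in a bounded region is controlled only in terms of $\omega$, and here $\omega$ must be of size $\max_{0<j<m+n}\|\bigwedge^j g_t\|$, i.e.\ exponentially large in $t$. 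Bounding each piece by the corresponding integral over all of $U$ then produces a first term of the form $K(t)\,t e^{-mnt}\alpha_i(x)^{\beta_i}$ with $K(t)$ growing in $t$, whereas the statement requires $c_0$ to depend only on $m,n$ -- and this uniformity is exactly what is needed later, when the estimate is iterated $N$ times in Proposition~\ref{prop:iterates} and Corollary~\ref{cor:coverN}. Nothing in your outline explains why the contributions of the different pieces collapse to a single term governed by $\alpha_i(x)^{\beta_i}$; this is not what \cite[Lemma 5.7]{EMM} does, so appealing to it does not supply the missing argument.

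The paper's proof avoids the problem by making the dichotomy at the base point $x$, uniformly in $s$: with $\omega=\max_{0<j<m+n}\|\bigwedge^j g_t\|$ and $L_i$ realizing $\alpha_i(x)$, set $\Psi_i=\{L\in F_i(x): \|L\|<\omega^2\|L_i\|\}$. If $\Psi_i=\{L_i\}$, then for every $s$ and every other $L\in F_i(x)$ one has $\|g_t u_s L_i\|\le (1+\|s\|)^2\|g_t u_s L\|$, hence $\alpha_i(g_t u_s x)^{\beta_i}\le (1+\|s\|)^{2\beta_i}\|g_t u_s L_i\|^{-\beta_i}$; only this single integral is then estimated, by tiling $M_{m,n}$ by translates $V+s'$ of the neighborhood from Lemma~\ref{lem:KUasymp}, summing against the Gaussian weight, and applying Proposition~\ref{prop:gaussian} -- no sum over subgroups ever appears, and $c_0$ depends only on $m,n$. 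If instead $\Psi_i\ne\{L_i\}$, submodularity applied to $L_i$ and $L'\in\Psi_i$ inside $x$ itself gives the pointwise bound $\alpha_i(g_t u_s x)\ll (1+\|s\|)^{O(1)}\omega^2\sqrt{\alpha_{i+j}(x)\alpha_{i-j}(x)}$ valid for all $s$, so the second term covers this case after integrating the polynomially growing factor against $d\rho$, with no decay estimate needed at all. If you restructure your argument along these lines, the remaining ingredients of your outline (duality already built into Proposition~\ref{prop:gaussian}, the passage from $K$ to $U$, the submodularity inequality) do go through.
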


\begin{proof}
For a given $x \in X$, let $L_i \in F_i(x)$ be a subgroup of $x$ such that
\begin{equation}\label{eqn:Li}
\alpha_i(x)=\frac{1}{\|L_i\|}.
\end{equation}
For any $i \in \{1,\dots,m+n\}$, $L \in F_i(x)$, and $s \in M_{m,n}$ one has
$$(1+\|s\|)^{-1} \|L\| \le \|u_{s} L\| \le (1+\|s\|) \|L\|.$$
Let $\omega=  \max_{0<j<m+n}\|\bigwedge^jg_t\| $; 
then
\begin{equation}\label{eqn:omega}
\omega^{-1} \le \frac{\|g_t v\|}{\|v\|} \le \omega \text{ for any } 0<j< m+n, \ v \in \textstyle \bigwedge^j( \R^{m+n})\nz.
\end{equation}
Let us consider
$$\Psi_i:=\{L : L \in F_i(x),\ \|L\|< \omega^2 \|L_i\|\}.$$
For any $L \in F_i(x) \smallsetminus \Psi_i$ we have 
$$\|u_s L_i\| \le (1+\|s\|) \|L_i\| \le \frac{1+\|s\|}{\omega^2} \|L\| \le \frac{(1+\|s\|)^2}{\omega^2} \|u_s L\| \text{ for any } s \in M_{m,n}.$$
Hence, for any $s \in M_{m,n}$ we see from \eqref{eqn:omega} that 
\begin{equation}\label{eqn:uniqueLi}
\|g_t u_s L_i\| \le (1+\|s\|)^2 \|g_t u_s L\|.
\end{equation}
First, assume that $\Psi_i=\{L_i\}$. Then, \eqref{eqn:uniqueLi} gives
\begin{equation}\label{eqn:1pls}
\int_{U} \alpha_i(g_t u_s x)^{\beta_i} \,d\rho(s) \le  \int_{U}(1+\|s\|)^{2\beta_i} \|g_t u_s L_i\|^{-\beta_i} \,d\rho(s).
\end{equation}
Clearly, for any $s_0 \in M_{m,n}$
\begin{multline*}
\int_{V+s_0}(1+\|s\|)^{2\beta_i}\| g_t u_s L_i \|^{-{\beta_i}} \,d \rho(s) \\
\ll\left( \max_{s \in V+{s_0}} (1+\|s\|)^{2\beta_i} e^{-\frac{\| s\|^2}{2}}\right)\int_{V+s_0} \| g_t u_s w \|^{-{\beta_i}}ds\\
\le e^{-\frac{\| s_0\|^2}{2}+O(\|s_0\|)}\int_{V+s_0} \| g_t u_s w \|^{-{\beta_i}}ds,
\end{multline*}
where the implied constants are independent of $s_0$. Summing over a lattice $\Lambda$ in the vector space $M_{n,m}$, sufficiently fine so that $M_{n,m}=V+\Lambda$, and using Lemma~\ref{lem:KUasymp} we see that 
\begin{align*}
\int_U (1+\|s\|)^{2\beta_i}\| g_t u_s L_i \|^{-\beta_i}\,d \rho(s) &\le \sum_{s' \in \Lambda} \int_{V +{s'}} (1+\|s\|)^{2\beta_i}\| g_t u_s L_i \|^{-\beta_i}\,d \rho(s) \\
&\ll \sum_{{s'} \in \Lambda} e^{-\frac{\| s'\|^2}{2}+O(\|s'\|)} \int_{V+{s'}} \| g_t u_s L_i \|^{-{\beta_i}}ds\\
&\ll \sum_{{s'} \in \Lambda}  (1+\| {s'}\|)^{{\beta_i}}e^{-\frac{\| s'\|^2}{2}+O(\|s'\|)} \int_{K} \| g_t k L_i \|^{-{\beta_i}}\,dk\\
& \ll \int_{K} \| g_t k L_i \|^{-{\beta_i}}\,dk.
\end{align*}
Thus, Proposition~\ref{prop:gaussian} and \eqref{eqn:1pls} give
$$\int_{U} \alpha_i(g_t u_s x)^{\beta_i} \,d\rho(s) \le c_0 t e^{-mnt}\|L_i\|^{-\beta_i}=c_0 t e^{-mnt} \alpha_i(x)^{\beta_i}$$
when $\Psi_i=\{L_i\}$, where $c_0$ depends only on $m,n$.
Otherwise let $L' \in \Psi_i$, $L' \neq L_i$. Then $\dim(L_i+L')=i+j$ for some $j>0$. From \eqref{eqn:Li}, \eqref{eqn:omega}  and \cite[Lemma~5.6]{EMM} we get for all $s \in M_{m,n}$ that
\begin{multline*}
\alpha_i(g_t u_s x)<(1+\|s\|)^{\beta_i}\omega \alpha_i(x)=\frac{(1+\|s\|)^{\beta_i}\omega}{\|L_i\|}<\frac{(1+\|s\|)^{\beta_i}\omega^2}{\sqrt{\|L_i\|\|L'\|}} \\\le \frac{(1+\|s\|)^{\beta_i}\omega^2}{\sqrt{\|L_i \cap L'\|\|L_i+L'\|}} \le (1+\|s\|)^{\beta_i}\omega^2 \sqrt{\alpha_{i+j}(x) \alpha_{i-j}(x)}.
\end{multline*}
Hence, if $\Psi_i \ne \{L_i\}$ then
\begin{multline*}
\int_{U} \alpha_i(g_t u_s x)^{\beta_i}\,d\rho(s) \\
\le \omega^{2\beta_i} \max_{0<j\le \max\{m+n-i,i\}}\left( \sqrt{\alpha_{i+j}(x) \alpha_{i-j}(x)}\right)^{\beta_i} \int_U (1+\|s\|)^{\beta_i} d\rho(s).
\end{multline*}
Since $\int_U (1+\|s\|)^{\beta_i} d\rho(s) \ll 1$, combining the above two cases leads to the desired result.
\end{proof}

\section{Choosing an appropriate height function}

In this section we will 
study  an abstract setting which will allow us to choose 
certain functions on $X$,   
to be used
later for constructing the compact subset $Q$ of Theorem~\ref{thm:main}.

\begin{prop}\label{prop:Delta}
Let $d \in \N$ be given, and let $\beta:\{0,\dots,d\} \to \R_+$ be a concave\footnote{More precisely, if the piecewise linear interpolation of $i\mapsto\beta(i)$ is a concave function $[0,d]\to\R_+$.} function such that $\beta(0)=\beta(d)=0$. Let $H$ be a set and let $A$ be a linear operator in the space of real functions on $H$ with $A(1)=1$. Suppose we are given functions $f_i:H \to \R_+$, where $ i=0,\dots, d$, such that  $ f_0=f_{d}=1$ and the following inequalities hold:
\begin{equation}\label{eqn:averaging}
A(f_i^{\beta_i}) \le {a} f_i^{\beta_i}+ \omega^{2\beta_i} \max_{0<j\le \min\{d-i,i\}}\left( \sqrt{f_{i+j}f_{i-j}}\right)^{\beta_i},  \  i=1, \dots,d-1,
\end{equation}
where $\beta_i=1/\beta(i)$ and ${a}$, $\omega$ are some positive constants. Then for any ${a}'>{a}$ there exist constants $\omega_0,\dots,\omega_{d}>0$ and $C_{0}>1$ such that the linear combination
\begin{equation}\label{eqn:f}{f:=\sum_{i=0}^{d} (\omega_i f_i)^{\beta_i}}\end{equation}
satisfies
{$$(A f) (h) \le {a}'f(h)+C_{0}\text{ for all }h\in H.$$}
\end{prop}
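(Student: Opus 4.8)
The plan is to build the linear combination $f$ by choosing the weights $\omega_i$ one at a time, processing the inner indices $i=1,\dots,d-1$ (the extreme weights $\omega_0,\omega_d$ playing no essential role beyond ensuring $f\ge 1$), so that each "bad" cross term $\omega^{2\beta_i}\bigl(\sqrt{f_{i+j}f_{i-j}}\bigr)^{\beta_i}$ gets absorbed into the corresponding terms $(\omega_{i+j}f_{i+j})^{\beta_{i+j}}$ and $(\omega_{i-j}f_{i-j})^{\beta_{i-j}}$ of $f$, with a coefficient we can make as small as we like by taking $\omega_i$ small relative to the already-chosen $\omega_{i\pm j}$. The concavity of $\beta$ is exactly what makes this possible: writing $\beta_i=1/\beta(i)$, concavity of $i\mapsto\beta(i)$ gives $\beta(i)\ge\frac12\bigl(\beta(i+j)+\beta(i-j)\bigr)$ for $0<j\le\min\{d-i,i\}$, hence $\frac{1}{\beta_i}\ge\frac12\bigl(\frac{1}{\beta_{i+j}}+\frac{1}{\beta_{i-j}}\bigr)$, i.e. $\beta_i\le\bigl(\tfrac{1}{2\beta_{i+j}}+\tfrac{1}{2\beta_{i-j}}\bigr)^{-1}$. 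This is precisely the exponent relation needed for a weighted Young/AM--GM inequality.

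First I would record the elementary inequality: for positive reals $a,b$ and a weight parameter, $\bigl(\sqrt{ab}\bigr)^{\beta_i}\le \varepsilon\, a^{\beta_{i+j}} + C(\varepsilon)\, b^{\beta_{i-j}}$ whenever $\beta_i\le\bigl(\tfrac{1}{2\beta_{i+j}}+\tfrac{1}{2\beta_{i-j}}\bigr)^{-1}$ (so that the sub-probability exponents $p=\frac{\beta_i}{2\beta_{i+j}}$ and $q=\frac{\beta_i}{2\beta_{i-j}}$ satisfy $p+q\le 1$, after which one applies Young's inequality together with $z^r\le z+1$ for $r\le 1$, $z\ge 0$ to handle the deficit $1-p-q$). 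Applying this with $a=(\omega_{i+j}f_{i+j})^{?}$ and $b=(\omega_{i-j}f_{i-j})^{?}$, I can bound $\omega^{2\beta_i}\bigl(\sqrt{f_{i+j}f_{i-j}}\bigr)^{\beta_i}$ by $\varepsilon\,(\omega_{i+j}f_{i+j})^{\beta_{i+j}} + \varepsilon\,(\omega_{i-j}f_{i-j})^{\beta_{i-j}} + C$ for any prescribed $\varepsilon>0$, where the constant $C$ and the threshold on $\omega_i$ depend on $\omega,\varepsilon$ and on $\omega_{i\pm j}$.

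The bookkeeping step is then: take $a'>a$, set $\varepsilon$ small enough that $d\cdot\varepsilon$-type cumulative losses stay below $a'-a$, and choose the $\omega_i$ in a suitable order. The natural order is to fix $\omega_0=\omega_d=1$ first, then pick $\omega_1,\omega_{d-1}$, then $\omega_2,\omega_{d-2}$, and so on moving inward; when it is time to choose $\omega_i$, all weights $\omega_{i'}$ with $i'$ strictly closer to the nearest endpoint than $i$ — which includes all the $\omega_{i\pm j}$ relevant to index $i$ in the case $j=\min\{i,d-i\}$ reaching an endpoint — have already been fixed. Hmm — one must check that the pairing $(i+j,i-j)$ always has at least one of its members already assigned, or else arrange the constants so the self-referential terms still close up. In fact, applying $A$ to $f=\sum_i(\omega_if_i)^{\beta_i}$, using $A(1)=1$ for the $i=0,d$ terms and \eqref{eqn:averaging} for the rest, yields $Af\le a\sum_i(\omega_if_i)^{\beta_i} + \sum_{i,j}\omega_i^{2\beta_i}\omega^{2\beta_i}\bigl(\sqrt{f_{i+j}f_{i-j}}\bigr)^{\beta_i} + 2$; each bad term is $\le \varepsilon(\omega_{i+j}f_{i+j})^{\beta_{i+j}}+\varepsilon(\omega_{i-j}f_{i-j})^{\beta_{i-j}}+C$ once $\omega_i$ is small, so summing gives $Af\le a f + (\text{const}\cdot\varepsilon)\,f + C_0 \le a'f+C_0$ provided $\varepsilon$ was chosen so that $a+(\text{const})\varepsilon\le a'$.

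The main obstacle I anticipate is the dependency structure of the weight choices: the cross term for index $i$ references indices $i\pm j$ on \emph{both} sides of $i$, so a naive "left to right" induction does not work; one genuinely needs the inward ordering (or a two-pass argument) together with the observation that each bad term can be split so that the coefficient $\varepsilon$ in front of $(\omega_{i+j}f_{i+j})^{\beta_{i+j}}$ is governed by how small $\omega_i$ is relative to $\omega_{i+j}$ — i.e. the already-chosen neighbor always has the larger weight. Verifying that this can be arranged consistently for all $(i,j)$ simultaneously, and that the finitely many resulting constraints on $\omega_1,\dots,\omega_{d-1}$ (each of the form "$\omega_i<$ something depending on previously chosen, larger-indexed-toward-the-boundary weights") are compatible, is the real content; the inequalities themselves are routine Young/AM--GM. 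Once the ordering is pinned down, collecting the finitely many error constants into a single $C_0>1$ and the finitely many $\varepsilon$-losses into the gap $a'-a$ is immediate.
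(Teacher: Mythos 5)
Your overall strategy is sound, and in fact closer to the paper's than it may appear: both arguments ultimately reduce to making the coefficient of each cross term $\big(\sqrt{f_{i+j}f_{i-j}}\big)^{\beta_i}$ small compared with the corresponding terms of $f$, using concavity of $\beta$. But the step you yourself flag as ``the real content'' --- exhibiting weights $\omega_1,\dots,\omega_{d-1}$ satisfying all the constraints simultaneously --- is precisely the crux, and the inward sequential scheme you sketch does not deliver it. After your Young/AM--GM step, the requirement coming from the pair $(i,j)$ has the form $\omega_i^{\beta_i}\,\omega^{2\beta_i}\,(\omega_{i-j}\omega_{i+j})^{-\beta_i/2}\le\varepsilon$, i.e.\ $\omega_i\le c_{\varepsilon,\omega,i}\,\sqrt{\omega_{i-j}\,\omega_{i+j}}$: it couples $\omega_i$ to the geometric mean of \emph{both} neighbours. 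In the inward ordering, when you choose $\omega_i$ with $i\le d/2$, the index $i+j$ with $j\le d-2i$ is \emph{farther} from the boundary than $i$, so $\omega_{i+j}$ has not yet been chosen (e.g.\ $d=4$, $i=1$, $j=1$ references $\omega_2$); shrinking it later can violate the constraint already imposed on $\omega_i$. Thus ``the already-chosen neighbour always has the larger weight'' is false, and splitting the Young inequality asymmetrically does not decouple the problem, since the prefactor still contains $\omega_{i+j}^{-\beta_i/2}$. As written, then, the induction does not close: the compatibility of the two-sided system of constraints is asserted rather than proved.

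The missing idea --- and the heart of the paper's proof --- is an explicit global ansatz with a quadratic exponent profile. The paper sets $\omega_i=\varepsilon^{d_-(i)d_+(i)}$, where $d_\pm(i)$ are the distances to the nearest indices of strict concavity; this yields the identity $\sqrt{\omega_{i-j}\omega_{i+j}}=\omega_i\,\varepsilon^{j^2}$ on the linear stretches of $\beta$. In your uniform-Young framework the even simpler choice $\omega_i=\varepsilon^{\,i(d-i)}$ gives $\omega_i/\sqrt{\omega_{i-j}\omega_{i+j}}=\varepsilon^{j^2}\le\varepsilon$ for \emph{every} admissible pair $(i,j)$, so all your constraints hold at once and your argument then closes exactly as you describe. (The paper's route differs in detail: it bounds $f_{i\pm j}\le\omega_{i\pm j}^{-1}f^{\beta(i\pm j)}$ directly and splits the pairs into those where concavity is strict, handled by a sublinear bound $C(1+f^{\,b})$ with $b<1$ absorbed into the additive constant, and those on linear stretches, handled exactly by the weight identity --- no per-pair Young inequality is needed.) Two minor points: the coefficient in your expansion of $Af$ should be $\omega_i^{\beta_i}$, not $\omega_i^{2\beta_i}$; and with $f_0=f_d=1$ and $\omega_0=\omega_d=1$ the boundary terms contribute the additive constant $2$ via $A(1)=1$, as in the paper.
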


We note that the special case of the proposition when $$\beta(1)=\beta(2)=\cdots=\beta(d-1)$$ appears in \cite{EMM}. In our context  $H = X$,  $f_i = \alpha_i$ as in \equ{defalphai}, $d = m+n$,  $A$ is defined by $$(Af)(x) :=  \int_{U}f(g_t u_s x) \,d\rho(s)$$ for a fixed $t\ge 1$, and 
the function $\beta$ is given by $\beta(i)=1/\beta_i$, where $\beta_i$'s are as in 
\equ{defbeta}. Clearly $\beta$ satisfies the convexity assumption. Take $c_0$ as in Corollary~\ref{lem:main}, and then, for arbitrary  $t \ge 1$, choose $\omega$ as in Corollary~\ref{lem:main} and let $a = c_0  t e^{-mnt}$ and $a' = 2c_0  t e^{-mnt}$. After that  take  $\omega_0,\dots,\omega_{m+n}$ and $C_{0}$ as in Proposition~\ref{prop:Delta}. Now consider  the linear combination
\begin{equation}\label{eqn:alphatilde}{\tilde \alpha :=\sum_{i=0}^{m+n} (\omega_i \alpha_i)^{\beta_i}};\end{equation}
Proposition~\ref{prop:Delta} then implies that for any $x\in X$ one has
$$\int_{U} {\tilde \alpha}(g_t u_s x) \,d\rho(s) \le 2c_0 t e^{-mnt} {\tilde \alpha}( x) +C_{0}.$$  Furthermore, the right hand side of the above inequality  is not greater than $3c_0 t e^{-mnt} {\tilde \alpha}( x) $   if $\tilde \alpha( x) \, > \, \frac{C_{0}e^{mnt}}{c_0t}$.  This way one arrives at

\begin{cor}\label{cor:main} There exists a constant $c > 0$ with the following property: for any $t \ge 1$ one can choose  constants $\omega_0,\dots,\omega_{m+n} $  and $\Mtilde$   
such that for any $x \in X$ with ${\tilde \alpha}(x)\, > \,\Mtilde$, where ${\tilde \alpha}$ is as in \eqref{eqn:alphatilde},  one has
$$\int_{U} {\tilde \alpha}(g_t u_s x) \,d\rho(s) \le c  t e^{-mnt} {\tilde \alpha}(x)\,.$$
\end{cor}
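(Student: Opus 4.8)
The statement to prove is Corollary~\ref{cor:main}, which is essentially a repackaging of Proposition~\ref{prop:Delta} applied to the concrete setting $H = X$, $f_i = \alpha_i$, and the averaging operator $(Af)(x) = \int_U f(g_t u_s x)\, d\rho(s)$.

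\textbf{The plan.} First I would verify that the hypotheses of Proposition~\ref{prop:Delta} are satisfied in this setting. The function $\beta(i) = 1/\beta_i$, with $\beta_i$ as in \equ{defbeta}, is piecewise linear with $\beta(0) = \beta(m+n) = 0$; one checks it is concave by noting that $\beta(i) = i/m$ for $i \le m$ and $\beta(i) = (m+n-i)/n$ for $i \ge m$, so it increases linearly, reaches its maximum $1$ at $i = m$, then decreases linearly --- this is manifestly a concave piecewise-linear function. The operator $A$ satisfies $A(1) = 1$ since $d\rho$ is a probability measure. The functions $f_0 = \alpha_0 = 1$ and $f_{m+n} = \alpha_{m+n} = 1$ by the conventions recorded just before Corollary~\ref{lem:main}. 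Finally, the averaging inequality \eqref{eqn:averaging} is exactly the content of Corollary~\ref{lem:main}, with $a = c_0 t e^{-mnt}$ and the same $\omega$ --- the max over $0 < j \le \min\{m+n-i, i\}$ matches (modulo the harmless discrepancy with $\max$ vs.\ $\min$ that appears in the statement of Corollary~\ref{lem:main}, which only enlarges the range).

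\textbf{Carrying it out.} Fix the constant $c_0$ from Corollary~\ref{lem:main}. Given $t \ge 1$, choose $\omega = \omega(t)$ as in Corollary~\ref{lem:main} and set $a = c_0 t e^{-mnt}$, $a' = 2 c_0 t e^{-mnt}$ (note $a' > a$). Apply Proposition~\ref{prop:Delta} to obtain constants $\omega_0, \dots, \omega_{m+n} > 0$ and $C_0 > 1$ such that $\tilde\alpha := \sum_{i=0}^{m+n} (\omega_i \alpha_i)^{\beta_i}$ satisfies $(A\tilde\alpha)(x) \le a' \tilde\alpha(x) + C_0 = 2 c_0 t e^{-mnt} \tilde\alpha(x) + C_0$ for all $x \in X$. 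Now set $\Mtilde := \dfrac{C_0 e^{mnt}}{c_0 t}$. If $\tilde\alpha(x) > \Mtilde$, then $C_0 < c_0 t e^{-mnt} \tilde\alpha(x)$, hence
$$(A\tilde\alpha)(x) \le 2 c_0 t e^{-mnt}\tilde\alpha(x) + c_0 t e^{-mnt}\tilde\alpha(x) = 3 c_0 t e^{-mnt}\tilde\alpha(x).$$
Taking $c := 3 c_0$ (which depends only on $m, n$) gives the claimed bound $\int_U \tilde\alpha(g_t u_s x)\, d\rho(s) \le c t e^{-mnt}\tilde\alpha(x)$.

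\textbf{Main obstacle.} There is essentially no analytic difficulty here --- the corollary is bookkeeping once Proposition~\ref{prop:Delta} and Corollary~\ref{lem:main} are in hand. The only point requiring a little care is matching the indexing conventions between the two results: one must confirm that the concavity hypothesis on $\beta$ genuinely holds for the piecewise-linear $i \mapsto 1/\beta_i$ (the ``kink'' at $i = m$ must be concave, which it is since the slope drops from $1/m$ to $-1/n$), and that the range of the maximum in \eqref{eqn:averaging} is compatible with --- indeed contained in --- the range appearing in Corollary~\ref{lem:main}. Once these alignments are checked, the deduction is immediate.
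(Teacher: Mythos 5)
Your proposal is correct and follows essentially the same route as the paper: apply Proposition~\ref{prop:Delta} with $H=X$, $f_i=\alpha_i$, $a=c_0te^{-mnt}$, $a'=2c_0te^{-mnt}$, then take $\Mtilde=C_0e^{mnt}/(c_0t)$ so that the additive constant $C_0$ is absorbed, yielding $c=3c_0$. The details you flag (concavity of $i\mapsto 1/\beta_i$ at $i=m$, $A(1)=1$, and the $\max$/$\min$ range discrepancy in Corollary~\ref{lem:main}) are exactly the bookkeeping points the paper treats implicitly.
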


\begin{proof}[Proof of Proposition~\ref{prop:Delta}]
Let
$$\Psi := \left\{ (i, j) \in \N^2: 0 < i < d, \quad 0<j \leq \min (i, d - i) \right\} $$
and
$$
\Phi = \left\{ (i, j) \in \Psi: \beta (i) = \tfrac12 \big(\beta (i - j) +  \beta (i + j)\big) \right\},
$$
and define
\[
{b} :=   \max_ {(i,j) \in \Psi \smallsetminus \Phi} \left (\frac{\beta (i - j) +  \beta (i + j)}{2\beta(i)}\right)
\]
(note that ${b} < 1$ in view of the definition of $\Phi$). 
Let $\varepsilon\in (0,1)$ be a small parameter to be set later. Let $I$ be the set of indices $i \in \{0,\dots,d\}$ at which $\beta (\cdot)$ is strictly concave, i.e.\  such that $(i,j) \not\in \Phi$ for all $j$. By definition $0,d \in I$. Define for every $i \in \left\{ 0, \dots, d \right\}$
\[
d_-(i) := \min_{i' \leq i, i' \in I} (i-i'), \qquad d_+ (i) := \min _ {i ' \geq i, i' \in I} (i ' - i)\,;
\]
note in particular that $d_-(0)=d^+(d)=0$. 
Now for every $i \in \left\{ 0, \dots, d \right\}$ define 
\[ \omega _ i := \varepsilon ^ {d_-(i)d_+(i)}.\]
Clearly for any $i \in \left\{ 0, \dots, d \right\}$ we have   $f_i \leq \omega _ i ^{-1} f^{\beta (i)}$, where $f$ is defined as in  \eqref{eqn:f}. 
Applying \eqref{eqn:averaging}, we can write
\begin{align*}
Af &= \sum_{i=0}^d \omega _ {i} ^ {\beta _ i} A (f _ i ^ {\beta _ i}) 
=2 + \sum_{i=1}^{d-1} \omega _ {i} ^ {\beta _ i} A (f _ i ^ {\beta _ i})\\
& \leq 2 + \sum_{i=1}^{d-1} \omega _ {i} ^ {\beta _ i} \left( {a} f_i^{\beta_i}+ \omega^{2\beta_i} \max_{0<j\le \min\{d-i,i\}}\left( \sqrt{f_{i+j}f_{i-j}}\right)^{\beta_i}\right)\\
& \leq  2(1 - {a}) + {a} f + \sum_ {(i,j)\in \Psi}\omega _ {i} ^ {\beta _ i}  \omega^{2\beta_i} \left( \sqrt{f_{i+j}f_{i-j}}\right)^{\beta_i}.
\end{align*}

Now,  for $(i, j) \in \Psi \smallsetminus \Phi$ it follows that
\begin{equation*}
\big(f _ {i-j} (h) f _ {i + j} (h)\big) ^ {1/ 2 \beta (i)} \leq C_{i,j,\varepsilon} f (h) ^{\frac {\beta(i-j) \beta (i+j)}{2 \beta (i)}} \leq C_{i,j,\varepsilon} \left(1+ f (h) ^ {{{b}}}\right)
\end{equation*}
for some constants $C_{i,j,\varepsilon}$  and all $h \in H$.
On the other hand, $(i,j) \in \Phi$ implies
$$
d_-(i-j)= d _ - (i) - j = d _ - (i + j) - 2 j
$$
and
$$
d_+(i-j)= d _ + (i) + j = d _ + (i + j) + 2 j.
$$
Hence $\sqrt {\omega_{i+j} \omega_{i-j}} = \omega _ i \varepsilon ^ {j ^2}$
and thus
\begin{align*}
\omega_i^{\beta_i}\big(f _ {i-j} (h) f _ {i + j} (h)\big) ^ {\beta_i/ 2} &\leq 
\left(\frac{\omega_{i}}{\sqrt{\omega_{i-j} \omega _ {i + j}}} \right) ^ {\beta_i} 
 \left (f(h)^{\beta(i-j)}f(h)^{\beta(i+j)}\right)^{\beta_i/2}  \\
& = \varepsilon ^ {\beta_i j^2}f(h)
.\end{align*}

We now estimate separately this sum for $(i,j) \in \Psi \smallsetminus \Phi$ and for $(i, j) \in \Phi$:
$$\sum_ {(i,j) \in \Psi \smallsetminus \Phi} \omega^{2\beta_i} \left( \sqrt{f_{i+j}f_{i-j}}\right)^{\beta_i} \leq C_{\varepsilon, \omega} (1+f^{{{b}}}) \,,
$$
for some constant $C_{\varepsilon, \omega}\,$, and
$$
\sum_ {(i,j) \in \Phi} \omega^{2\beta_i} \left( \sqrt{f_{i+j}f_{i-j}}\right)^{\beta_i} \leq \left (\sum_ {(i,j) \in \Psi} \omega^{2\beta_i} \varepsilon ^ {\beta _ i j ^2} \right)f\,.
$$
Recall that we are given an arbitrary ${a}' > {a}$. If we take $\varepsilon$ small enough so that
\begin{equation*}
\sum_ {(i,j) \in \Psi} \omega^{2\beta_i} \varepsilon ^ {\beta _ i j ^2} < \frac {{a}'-{a}}2\,,
\end{equation*}
then, using the fact that $\omega_i^{\beta_i} \le 1$, we can conclude that
\begin{equation*}
\begin{aligned}
Af &\leq 2(1-{a}) + {a}f + C_{\varepsilon, \omega} (1+f^{{{b}}}) +\frac {{a}'-{a}}2 f \\ &< 2 + {a}'f - \frac {{a}'-{a}}2 f +C_{\varepsilon, \omega} (1+f^{{{b}}}) \leq {a}'f+C'_{\varepsilon, \omega}
\end{aligned}
\end{equation*}
for an appropriate constant $C ' _ {\varepsilon, \omega}$.
\end{proof}

\section{Averages of \texorpdfstring{${\tilde \alpha}$}{alpha} under \texorpdfstring{$g_{Nt}$}{the flow} and coverings by small balls}

For the next two statements we will take  $c$  as in Corollary~\ref{cor:main}, and 
for a given $t \ge 1$ fix ${\tilde \alpha}$ as in \eqref{eqn:alphatilde} and $\Mtilde$ as in Corollary~\ref{cor:main}. Also, for any $x \in X$,  $M>0$ and $N \in \N$ let us define
$$Z_x(M,N,t):=\big\{u\in B_1^U:  {\tilde \alpha}(g_{t\ell} u x)  \, > \, M \ \  \,\forall \ell \in \{1,\dots,N\}\big \}.$$
Observe that the function $\tilde \alpha$ is proper due to Mahler's Compactness Criterion, hence for any $M > 0$ the set
$$
X_{\le M} := \{x\in X : \tilde\alpha(x) \le M\}
$$
is compact. Note also that clearly $Z_x(M,N,t) = Z_x(X_{\le M},N,t,1)$, where the latter is defined as in  \equ{zx}.

\begin{prop}\label{prop:iterates} 
There exists $M_0>\Mtilde$ such that for any $x \notin X_{\le T}$, 
any $ N \in \N$ and any $M>M_0$ we have
$$\int_{Z_x(M,N-1,t)} {\tilde \alpha}(g_{Nt} u_s x) ds\ll c^N t^{N} e^{-mntN}{\tilde \alpha}(x)\,.$$
\end{prop}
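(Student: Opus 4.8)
The plan is to prove the estimate by induction on $N$, with the base case $N=1$ handled directly by Corollary~\ref{cor:main} and the inductive step using the Markov-type property of the integral operator $A$.

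First I would set up the induction. For $N=1$ we need $\int_{Z_x(M,0,t)}{\tilde\alpha}(g_tu_sx)\,ds \ll c\,t\,e^{-mnt}{\tilde\alpha}(x)$; since $Z_x(M,0,t) = B_1^U$ (empty constraint) and the Gaussian density $d\rho$ is bounded below on $B_1^U$, this follows immediately from Corollary~\ref{cor:main}, provided $\tilde\alpha(x) > \Mtilde$ — which is why the hypothesis requires $x \notin X_{\le T}$, and we will take $M_0 \ge \Mtilde$ so that the constraint $\tilde\alpha > M$ is meaningful. For the inductive step, I would decompose the orbit: write $g_{Nt}u_sx = g_t u_{s'} \big(g_{(N-1)t}u_{s''}x\big)$ after an appropriate change of variables expressing $g_{(N-1)t}u_s$ in terms of a $U$-element times something in the weak-stable part, or more cleanly, fiber the integral over $U$ by first integrating over the last factor. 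The cleanest route is Fubini combined with the observation that pushing $g_{t\ell}u_s$ forward distributes the Gaussian measure in a controlled way; the point $g_{(N-1)t}u_sx$ ranges (as $s$ ranges over the relevant domain) and we can apply Corollary~\ref{cor:main} to the innermost average provided the intermediate orbit point still has $\tilde\alpha > \Mtilde$, which is exactly guaranteed by membership in $Z_x(M,N-1,t)$ once $M > M_0 \ge \Mtilde$.

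The key technical maneuver, and the step I expect to be the main obstacle, is the change of variables relating the measure $ds$ (or $d\rho(s)$) on the domain $Z_x(M,N-1,t) \subset B_1^U$ at "time $N$" to a product of copies of the Gaussian measure $d\rho$ across the $N$ iterates. The issue is that $g_{t\ell}u_s g_{-t\ell}$ expands $U$ by a factor $e^{(m+n)t\ell}$, so the single ball $B_1^U$ at the final stage corresponds, after conjugation, to a tiny ball at earlier stages; one must carefully track Jacobians and the fact that the natural self-similar structure is a tree of $\asymp e^{(m+n)tk}$ sub-balls at each level. The standard device (following the EMM-style argument) is to cover $B_1^U$ by $\asymp e^{(m+n)t(N-1)}$ translates $B_{e^{-(m+n)t(N-1)}}^U v_j$, on each of which $g_{(N-1)t}u_s x$ is approximately $g_{(N-1)t}u_{v_j}x$ up to a bounded error from a compact piece; then $\int_{Z_x(M,N-1,t)}{\tilde\alpha}(g_{Nt}u_sx)\,ds$ splits as a sum over $j$ of integrals that, after rescaling, become $e^{-(m+n)t(N-1)}$ times an average of $\tilde\alpha(g_t u_{s'}(\cdot))$ over a unit-scale Gaussian-comparable region, to which Corollary~\ref{cor:main} applies yielding a factor $c\,t\,e^{-mnt}$; the surviving sum over $j$ reconstitutes $\int_{Z_x(M,N-2,t)}{\tilde\alpha}(g_{(N-1)t}u_sx)\,ds$, and the induction closes with each level contributing one factor of $c\,t\,e^{-mnt}$.

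Finally I would verify the bookkeeping: the constant $M_0$ must be chosen so that whenever $\tilde\alpha(g_{t\ell}ux) > M_0$ for all $\ell \le N-1$ we may legitimately invoke Corollary~\ref{cor:main} at the corresponding orbit point — so $M_0 = \Mtilde$ suffices, but we may need $M_0$ slightly larger to absorb the bounded distortions $(1+\|s\|)$ and the compact-set errors introduced by replacing $g_{t\ell}u_sx$ with $g_{t\ell}u_{v_j}x$; choosing $M_0$ depending only on $m,n,t$ (through $c$ and the fixed compact pieces) keeps it uniform in $x$ and $N$. Collecting the $N$ factors gives $\ll c^N t^N e^{-mntN}{\tilde\alpha}(x)$, as claimed. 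The only genuinely delicate point is making the "approximately constant on sub-balls" step rigorous with errors that are multiplicative and bounded independently of $N$; this is where one uses that $\bigcup_{t>0} g_t(\text{compact in }P^0)g_{-t}$ stays in a compact set, exactly as in the proof of Theorem~\ref{thm:escape}.
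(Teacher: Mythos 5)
Your high-level strategy---apply Corollary~\ref{cor:main} once per time step, and use membership in $Z_x(M,N-1,t)$ together with bounded distortion of $\tilde\alpha$ under small elements of $U$ (as in \eqref{eq:Calpha}) to keep the intermediate orbit points above the threshold $\Mtilde$---is the right one. But the way you organize it, as an induction on $N$ with a covering of $B_1^U$ by balls of radius $e^{-(m+n)t(N-1)}$ at each stage, has a genuine gap in the inductive bookkeeping. When you ``reconstitute'' the level-$(N-1)$ integral, the sum over sub-balls meeting $Z_x(M,N-1,t)$ is an integral over an \emph{enlarged} set, and to recognize it as a set of the form $Z_x(\cdot,N-2,t)$ you must lower the threshold by the distortion constant $C_{\tilde\alpha}$. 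Repeating this at every level degrades the threshold to $C_{\tilde\alpha}^{-N}M$, so the naive induction closes only if $M_0$ grows with $N$---which ruins the applications, where $M$ is fixed and $N\to\infty$. The fix is a scale-dependent distortion estimate: at the stage with covering radius $e^{-(m+n)t(N-1)}$ the condition at time $\ell$ is perturbed only by $u_w$ with $\|w\|\ll e^{-(m+n)t(N-1-\ell)}$, and the product of these factors over all stages is a convergent geometric series, hence a single constant uniform in $N$ and $\ell$. You do flag ``approximately constant on sub-balls'' as the delicate point, but the mechanism you invoke---compactness of $\bigcup_{t>0}g_t(\text{compact in }P^0)g_{-t}$, as in Theorem~\ref{thm:escape}---is not the relevant one: your perturbations live in the expanding group $U$ itself, and what saves you is the matching of covering radius to the expansion rate plus the geometric decay just described. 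Two smaller points: each inductive step also costs a fixed multiplicative constant (Lebesgue-versus-Gaussian comparison on a unit ball, covering overlaps), so your argument yields $(Cc)^N t^N e^{-mntN}$ rather than $\ll c^N t^N e^{-mntN}$ (harmless downstream, since any $C_1^N$ is absorbed by taking $t_0>C_1$ as in Corollary~\ref{cor:coverN}, but not the stated bound); and the number of sub-balls is $\asymp e^{(m+n)mnt(N-1)}$, not $e^{(m+n)t(N-1)}$, since $\dim U=mn$.

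The paper avoids all of this with one global computation instead of an induction: it iterates \eqref{eqn:51} to bound the $N$-fold integral of $\tilde\alpha(g_tu_{s_N}\cdots g_tu_{s_1}x)$, against the product Gaussian, over the set where the intermediate points stay above $\Mtilde$; it then uses the exact identity $g_tu_{s_N}\cdots g_tu_{s_1}=g_{Nt}u_{\phi(s_1,\dots,s_N)}$ with $\phi(s_1,\dots,s_N)=\sum_k e^{-(k-1)(m+n)t}s_k$ linear, whose Gaussian pushforward is again a Gaussian $\rho_{\sigma^2}$ with $\sigma^2\in[1,2]$ by \eqref{eqn:conv}, hence comparable to Lebesgue on $B_1^U$ with a single constant. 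Moreover, the condition $\phi(s_1,\dots,s_N)\in Z_x(M,N-1,t)$ forces all intermediate points above $\Mtilde$ at the cost of a single factor $C_{\tilde\alpha}$, because for each $\ell$ the discrepancy $g_{\ell t}u_{s'}g_{-\ell t}$ involves only the geometrically damped tail $\sum_{k>\ell}e^{-(k-1-\ell)(m+n)t}s_k$---exactly the uniformity in $N$ that your induction is missing. If you wish to keep the multiscale formulation, import that geometric-series bookkeeping; otherwise the convolution identity is the cleaner route.
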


\begin{proof}
For any $\sigma>0$ let $\rho_{\sigma^2}$ denote the Gaussian probability measure on $\R^{mn}$ where each component is i.i.d.\ with mean 0 and variance $\sigma^2$. In particular, $\rho=\rho_1$.  We will use the following fact: for any continuous function $f$ on $U$ and $\varepsilon>0$
\begin{equation}\label{eqn:conv}
\int_U \int_U f(\varepsilon x+y) \,d\rho_1(x)\,d\rho_1(y) = \int_U f(z) \,d\rho_{1+\varepsilon^2}(z).
\end{equation}

Let $t \ge 2c$ be given and let ${\tilde \alpha}$, $\Mtilde$ be as 
above. 
Corollary~\ref{cor:main} gives 
\begin{equation}\label{eqn:51}
\int_U {\tilde \alpha}( g_t u_s x) \,d\rho_1(s)\le c t e^{-mnt}{\tilde \alpha}(x) \text{ for any }x \in X
\text{ with } \tilde\alpha(x) > \Mtilde.
\end{equation}

It follows from the definition of $\tilde \alpha$ that there is a constant $C _ {\tilde \alpha}$, dependent only on $m,n$, such that 
\eq{Calpha}{C _ {\tilde \alpha}^{-1} \leq \frac{\tilde \alpha(ux)}{\tilde \alpha(x)} \leq C _ {\tilde \alpha} \qquad\text{for any~$u \in B_2^U$ and $x\in X$}.}
Pick $M \geq C _ {\tilde \alpha} \Mtilde$.

Let $N \in \N$ be given. Using \eqref{eqn:51} repeatedly we get
\begin{equation}\label{eqn:intZ}
\int \dots \int_{Z} {\tilde \alpha}(g_t u_{s_N}\dots g_t u_{s_1} x  ) \,d\rho_1(s_N)\dots \,d\rho_1(s_1) \le c^N t^N e^{-mntN} {\tilde \alpha}(x),
\end{equation}
where 
$$Z=\{(s_1,\dots,s_N) \in U^N: \tilde\alpha(g_t u_k \dots g_t u_1x ) > \Mtilde
\ \  \forall \,k=1, \dots,N-1\}.$$
Write $g_t u_{s_N}\dots g_t u_{s_1} =g_{Nt} u_{\phi(s_1,\dots,s_N)} $ where
$$\phi(s_1,\dots,s_N):=\sum_{k=1}^N e^{-(k-1)(m+n)t} s_k.$$
Thus, using \eqref{eqn:conv} we see that \eqref{eqn:intZ} takes the form
\begin{equation}\label{eqn:sigma2}
\int_{U} 1_{\phi(Z)}(s) {\tilde \alpha}(g_{Nt} u_{s}x  ) \,d\rho_{\sigma^2}(s)\le c^N t^N e^{-mntN} {\tilde \alpha}(x),
\end{equation}
where $\sigma^2=\sum_{k=1}^N e^{-2(k-1)(m+n)t}$. Although $\sigma^2$ depends on $N$, it is in $[1,2]$ whenever $t \ge 1$. This implies that $ds$ is absolutely continuous with respect to  $d\rho_{\sigma^2}$ on $B_1^U$,  with a uniform bound on the Radon-Nikodym derivative. Hence, \eqref{eqn:sigma2} gives
\eq{overtheball}{\int_{B_1^U} 1_{\phi(Z)}(s){\tilde \alpha}(g_{Nt} u_{s}x  ) ds\ll c^N t^N e^{-mntN} {\tilde \alpha}(x).}

We claim that $\phi(s_1,\dots,s_N)\in Z_x(M,N-1,t)$ implies that  $(s_1,\dots,s_N)\in Z$.
Assuming the claim, we get that
$$\int_{Z_x(M,N-1,t)}{\tilde \alpha}(g_{Nt} u_{s}x  ) ds\ll c^N t^N e^{-mntN} {\tilde \alpha}(x).$$
So it suffices to show the claim. Suppose that $$s\,=\phi(s_1,\dots,s_N)=\sum_{k=1}^N e^{-(k-1)(m+n)t} s_k\in Z_x(M,N-1,t)\,.$$ This means that for any $\ell=1,\dots,N-1$ 
we have
\begin{equation}\label{eqn:y'y''}
\tilde\alpha(g_{\ell t}u_sx) =\tilde\alpha\big((g_{\ell t}u_{ s'}g_{-\ell t}) g_{\ell t} u_{s''}\big) 
> M,
\end{equation}
where $s'=\sum_{k=\ell +1}^N  e^{-(k-1)(m+n)t} s_k$ and $s''=\sum_{k=1}^\ell  e^{-(k-1)(m+n)t} s_k.$ Clearly
$g_{\ell t}u_{ s'}g_{-\ell t} \in B_2^U$; therefore 
\eqref{eqn:y'y''} together with our choice of $M$ give
$$\tilde\alpha(g_{\ell t} u_{s''} x) =\tilde\alpha( g_t u_{\ell} \dots g_t u_{1} x) 
> \Mtilde.$$
Thus  $(s_1,\dots,s_N)\in Z$, which finishes the proof of the claim.
\end{proof}

\begin{cor}\label{cor:coverN}
For all sufficiently large $M>0$, any $N \in \N$ and $x \in X$, the set $Z_x(M,N,t)$ can be 
covered with $\frac{\tilde \alpha(x)}{M} C_1^N t^{N} e^{(m+n-1)mntN}$ balls in $U$ of radius 
$e^{-(m+n)tN}$  for some $ C_1>0$ independent of $t$, $N$ and $x$.
\end{cor}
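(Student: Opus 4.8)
The plan is to derive Corollary~\ref{cor:coverN} from Proposition~\ref{prop:iterates} by a Chebyshev (Markov) inequality argument combined with a standard volume-counting lemma. First I would dispose of the trivial case: if $x \in X_{\le T}$ (equivalently $\tilde\alpha(x) \le T$), then $\tilde\alpha(x)/M$ together with the exponential factors already dominates the number of $e^{-(m+n)tN}$-balls needed to cover the whole of $B_1^U$ (whose $e^{-(m+n)tN}$-covering number is $\asymp e^{(m+n)mntN}$, so one even has room to spare since $(m+n-1)mnt < (m+n)mnt$); so we may assume $x \notin X_{\le T}$ and apply Proposition~\ref{prop:iterates}.

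Next, fix $M > M_0$ with $M_0$ as in Proposition~\ref{prop:iterates}. Write $Z = Z_x(M,N-1,t)$ and recall that $Z_x(M,N,t) \subset Z_x(M,N-1,t) = Z$. The key point is that every $u_s \in Z_x(M,N,t)$ satisfies $\tilde\alpha(g_{Nt}u_sx) > M$ by definition, so by Markov's inequality applied to Proposition~\ref{prop:iterates},
\begin{equation*}
\big|\{s : u_s \in Z_x(M,N,t)\}\big| \le \frac1M \int_{Z} \tilde\alpha(g_{Nt}u_sx)\,ds \ll \frac{\tilde\alpha(x)}{M} c^N t^N e^{-mntN},
\end{equation*}
where $|\cdot|$ is Lebesgue measure on $M_{m,n} \cong \R^{mn}$. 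This bounds the \emph{measure} of $Z_x(M,N,t)$; but a subset of $\R^{mn}$ of measure $V$ contained in the bounded set $B_1^U$ can be covered by $\ll V r^{-mn} + r^{-mn(\text{something})}$ balls of radius $r$ — more precisely, taking $r = e^{-(m+n)tN}$, one covers $B_1^U$ by a grid of $\asymp r^{-mn} = e^{(m+n)mntN}$ cubes of side $\asymp r$, retains only those meeting $Z_x(M,N,t)$, and notes that the number retained is $\ll V/r^{mn} + (\text{boundary terms})$. Since $V \ll \frac{\tilde\alpha(x)}{M} c^N t^N e^{-mntN}$, we get $V r^{-mn} \ll \frac{\tilde\alpha(x)}{M} c^N t^N e^{-mntN} e^{(m+n)mntN} = \frac{\tilde\alpha(x)}{M} c^N t^N e^{(m+n-1)mntN}$, which is exactly the claimed bound with $C_1 = c$ (absorbing the implied constant). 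One should double-check that the ``boundary'' contribution — cubes meeting $Z_x(M,N,t)$ but of small measure-overlap — is also dominated by the same expression, which it is because that count is at most the total $\asymp e^{(m+n)mntN}$, dwarfed once $\tilde\alpha(x)/M$ is not too small, and in the regime where $\tilde\alpha(x)/M$ is small we are back in the trivial case handled first; alternatively one simply covers $Z_x(M,N,t)$ by the cubes of a grid of mesh comparable to $r$ that actually intersect it and bounds that number by $\ll 1 + V/r^{mn}$, the $1$ being harmless.

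The main obstacle I expect is the passage from a measure bound to a covering bound with the \emph{correct} exponent and a constant $C_1$ independent of $t$, $N$, $x$: one must be careful that the grid argument does not introduce a factor depending on $t$ or $N$ (it does not, since the mesh-to-radius ratio is an absolute constant), and that the trivial case genuinely covers the regime $\tilde\alpha(x)/M \lesssim 1$ so that the additive ``$+1$'' from the grid count can be absorbed into $\frac{\tilde\alpha(x)}{M} C_1^N t^N e^{(m+n-1)mntN}$ — which holds because that quantity is $\gtrsim c^N t^N e^{(m+n-1)mntN} \ge 1$ for $c, t \ge 1$. A secondary point is ensuring ``sufficiently large $M$'' in the statement means $M > M_0$ with $M_0$ from Proposition~\ref{prop:iterates} (and $M \ge C_{\tilde\alpha}\Mtilde$), so all invocations are legitimate. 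Once these bookkeeping issues are settled the proof is short.
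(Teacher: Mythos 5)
Your reduction from Proposition~\ref{prop:iterates} to the covering statement has a genuine gap at the measure-to-covering step. Markov's inequality does give $\nu\big(Z_x(M,N,t)\big)\ll \frac{\tilde\alpha(x)}{M}c^Nt^Ne^{-mntN}$, but a bound on the Lebesgue measure of a set does \emph{not} bound the number of $r$-cubes of a grid that \emph{meet} it: $Z_x(M,N,t)$ could a priori be a very thin set threading through many cells (to control the number of cells met one would need a measure bound on an $r$-thickening, not on the set itself). Your two attempted patches do not close this. The claim that the target bound $\frac{\tilde\alpha(x)}{M}C_1^Nt^Ne^{(m+n-1)mntN}$ dominates the total covering number $\asymp e^{(m+n)mntN}$ of $B_1^U$ ("room to spare since $(m+n-1)mnt<(m+n)mnt$") is backwards: that inequality shows the target bound has the \emph{smaller} exponent, so for fixed $x$ and large $N$ it is far below $e^{(m+n)mntN}$, and the "trivial case" and the absorption of the cells of small overlap both collapse. (If one wants to dispose of small $\tilde\alpha(x)$, the correct observation is that $Z_x(M,N,t)$ is then \emph{empty}, since $\tilde\alpha(g_tu x)\le C_{\tilde\alpha}e^{O(t)}\tilde\alpha(x)$ for $u\in B_1^U$, not that the bound exceeds the trivial covering count.)

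The paper's proof supplies exactly the missing ingredient by working with two thresholds. One partitions $B_1^U$ into $\ll e^{(m+n-1)mntN}$ cells $D_i$ of diameter $\asymp r=e^{-(m+n)tN}$, each of volume $\gg r^{mn}$, and applies Proposition~\ref{prop:iterates} to the \emph{larger} set $Z_x(C_{\tilde\alpha}^{-1}M,N,t)$. The key dynamical point is a saturation property: if a cell $D_i$ contains one point $u$ with $\tilde\alpha(g_{t\ell}ux)\le C_{\tilde\alpha}^{-1}M$ for some $\ell\le N$, then since $g_{t\ell}D_i$ has diameter at most $e^{(m+n)t\ell}\,2r\le 2$, the quasi-invariance \equ{Calpha} of $\tilde\alpha$ under $B_2^U$ forces $g_{t\ell}D_i x\subset X_{\le M}$, so $D_i$ is disjoint from $Z_x(M,N,t)$. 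Hence every cell meeting $Z_x(M,N,t)$ lies entirely inside $Z_x(C_{\tilde\alpha}^{-1}M,N,t)$, and only then is the count of such cells legitimately bounded by $\nu\big(Z_x(C_{\tilde\alpha}^{-1}M,N,t)\big)/r^{mn}\ll\frac{\tilde\alpha(x)}{M}C_1^Nt^Ne^{(m+n-1)mntN}$. Without this containment argument (or some substitute thickening argument) your proof does not yield the stated covering bound; with it, your Markov-plus-grid outline becomes essentially the paper's proof.
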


\begin{proof}
We partition $B _ 1 ^ U$ into $p \leq C e ^{(m+n-1)mntN}$ disjoint subsets $D _ 1$, \dots, $D _ p$, each containing a ball of radius $r/10$ and contained in a ball of radius $r$ for $r=e ^ {- (m + n) t N}$. This can be done e.g.\ by choosing a maximal $r/2$-separated subset of $B _ 1 ^ U$, say $\{u_1,\dots,  u_p\}$, and letting
\[
D_i := \left(B^U_r u_i \smallsetminus \Big(\bigcup _ {j = 1} ^ {i-1} D_j \cup \bigcup_ {j=i+1}^p B^U_{r/2}u_i\Big)\right)\cap B^U _1.
\]
Note that for $u _ i$ near the boundary of $B _ 1 ^ U$ the set $D _ i$ may fail to contain the ball $B _ {r/2} ^ U (x _ i)$, but $B _ {r/2} ^ U u _ i \cap B _ 1 ^ U \subset D _ i$ will certainly contain some ball of radius $r / 10$.

Let $\nu$ denote the Lebesgue measure on $U$, normalized so that $\nu (B _ 1 ^ U) = 1$. By Proposition~\ref{prop:iterates} we know that for $M$ sufficiently large (depending on $t$)
\begin{equation*}
\int_ {Z _ x (C_{\tilde \alpha}^{-1} M, N -1, t)} {\tilde \alpha}(g_{Nt} u_s x) ds\ll c^N t^{N} e^{-mntN}{\tilde \alpha}(x)
\end{equation*}
with $C _ {\tilde \alpha}$ as in the proof of that proposition, hence
\begin{equation*}
\nu \big( Z _ x (C_{\tilde \alpha}^{-1} M, N , t)\big) \ll \frac {\tilde \alpha(x)}{M} {c^N}t^{N} e^{-mntN}
.\end{equation*}
Since $\nu (D _ i) \gg  e^{-mn(m+n)tN}$ for all $i$, it follows that the number $p_1$ of the $D_i$'s that are  contained in $Z _ x (C_{\tilde \alpha}^{-1} M, N , t)$ satisfies $$p_1 \leq \frac {\tilde \alpha(x)}{M} {C_1^N}t^{N} e^{mn(m+n-1)tN}$$ (for an appropriate constant $C _ 1$ independent of $t,N,x$). Reordering the $D_i$'s if necessary we can assume that these are exactly $D_1$, \dots, $D_{p _1}$.

Take now $i> p _ 1$. Then $D_i$ contains at least one element $u$ outside the set $Z _ x (C_{\tilde \alpha} ^{-1}M, N , t)$, therefore
for some $1 \leq \ell \leq N$ it holds that
\(
\tilde \alpha (g _ {t \ell} u x) \leq M / C _ {\tilde \alpha}
\).
But then, as $D_i \subset B _ r ^Uu_i$,
\begin{equation*}
g _ {t \ell} D_i \subset g_ {t \ell} B _ {2 r} ^ U u = B _ {e^{(m + n) t \ell} 2r} ^ U g _ {t \ell} u.
\end{equation*}
Hence, since 
$e^{(m + n) t \ell} 2r \leq 2$, by definition of $C _ {\tilde \alpha}$ one has
\(
g _ {t \ell} D_i x \subset X _{\leq M,}
\)
so $D_i$ is disjoint from $Z _x (M,N,t)$. Thus
\begin{equation*}
Z _x (M, N, t) \subset \bigcup_ {i = 1} ^ {p _ 1} D_i,
\end{equation*}
and the proposition follows.
\end{proof}

\begin{proof}[Proof of Theorem~\ref{thm:main}]
Let $t > t_0$ be given, with $t_0$ a large real number, to be determined later, depending only on $m,n$. Let ${\tilde \alpha}$ and $\Mtilde$ be as in Corollary~\ref{cor:main}. 
We will find large enough $M > 0$ such that the compact set $Q=X_{ \le M}$ satisfies the conclusion of the theorem. 

For a given $N \in \N$ and $x \in X$ we consider a subset $J_x$ of $ \{1,\dots,N\}$ given by
$$J_x:=\big\{\ell \in  \{1,\dots,N\} : g_{\ell t} x 
\notin Q\big\}.$$
Then one can write the set $Z_x(Q,N,t,\delta)$ as
$$Z_x(Q,N,t,\delta)=\{u \in B_{1}^U : |J_{u x }| \ge \delta N \}.$$

For any  subset $J$ of $ \{1,\dots,N\}$ we set 
$$Z(J):=\{u \in B_{1}^U : J_{ u x}=J\}.$$
We note that $Z_x(Q,N,t,\delta) =\bigcup_J Z(J)$ where the union runs over all subsets $J$ 
of $ \{1,\dots,N\}$ with cardinality  at least $ \delta N$. Clearly, the number of such subsets 
of $ \{1,\dots,N\}$ is at most $2^N \le t^N.$ Thus, it suffices to show that for a given subset 
$J \subset  \{1,\dots,N\}$, the set $Z(J)$ can be covered with $C(x)t^{2N}e^{mnt[(m+n)N-|J|]}$ balls of radius $e^{-(m+n)tN},$ for
\eq{cx2}{
C(x)=\max\{\alpha_i^{\beta_i}(x):i=1,2,\dots,m+n-1\},
}
cf.~\equ{cx}.

Let $J$ be as above. We decompose $J$ into ordered subintervals $J_1, \dots,J_p$ of maximal possible sizes such that $J=\bigsqcup_{i=1}^p J_i.$ Let $I_1, \dots,I_{p'}$ be the ordered maximal subintervals of $ \{1,\dots,N\} \smallsetminus J$ such that 
$$ \{1,\dots,N\}=\bigsqcup_{i=1}^p J_i \sqcup \bigsqcup_{j=1}^{p'} I_j.$$
We now inductively prove the following claim: for any integer $L \le N$, if 
 \begin{equation}\label{eqn:indK}
 \{1,\dots,L\}=\bigsqcup_{i=1}^\ell J_i \sqcup \bigsqcup_{j=1}^{\ell'} I_j,
 \end{equation} then the set $Z(J)$ can be covered with 
 \begin{equation}\label{eqn:S}
 S \le\max\left(1, \frac{\tilde\alpha(x)}{M}\right)t^{2L}e^{mnt[(m+n)L-|J_1|-\cdots - |J_{\ell}|]}
 \end{equation}
  sets of the form $D u_1 ,  \dots, D u_S $ where $D=g_t^{-L} B_\eta^U g_t^{L},$ i.e.\ a ball of radius $e^{-(m+n)tL}$. Comparing (\ref{eqn:alphatilde}) and \equ{cx2}, we note that for sufficiently large $M$ we have 
  \eq{cxbound}{\max\left(1, \frac{\tilde\alpha(x)}{M}\right) \le C(x).}
  Thus,
   by letting $L=N$ we establish the 
   claim for $|J|\ge \delta N$. If in the first step we have $\{1,\dots,L\}=J_1$  then \eqref{eqn:S} follows from Corollary~\ref{cor:coverN} once $t \ge C_1$ and $M$ is large enough to satisfy the conclusion of the corollary. If $\{1,\dots,L\}=I_1$ then as $Z(J) \subset B_1^U$ it is obvious that the set $Z(J)$ can be covered with 
   $\leq C_2 e^{mnt(m+n)L}$ balls of radius $e^{-(m+n)L}$, with $C_2$ depending only on $nm$. 
   
   Assume now the set $Z(J)$ can be covered with $S$ balls of radius $ e^{-(m+n)tL}$ for some $L$ satisfying \eqref{eqn:indK}. In the inductive step, for the next $L' >L$ satisfying an equation similar to \eqref{eqn:indK}, we have two cases:  either \eq{case1}{ \{1,\dots,L'\}= \{1,\dots,L\} \sqcup I_{\ell'+1}} or \eq{case2}{ \{1,\dots,L'\}= \{1,\dots,L\} \sqcup J_{\ell+1}.}
 
Consider first the case
 \equ{case1}. Obviously, each box $ Du_i$ in $U$ of size $\eta e^{-(m+n)tL}$ can be covered by $C_2e^{(m+n)t|I_{\ell'+1}|mn}$ balls of radius 
 $e^{-(m+n)t(L+|I_{\ell'+1}|)}$.
 Thus, noting that $L+|I_{\ell'+1}|=L'$, from \eqref{eqn:S} and  \equ{cxbound} it follows that if we assume, as we may, that $t>C_2$, the set $Z(J)$ can be covered by 
 $$C(x) t^{2L'}e^{mnt[(m+n)L'-|J_1|-\cdots - |J_{\ell}|]}$$
  balls of radius $\eta e^{-(m+n)tL'}$ as claimed.

 Now assume \equ{case2} and 
 consider one of the balls $ D u_i$ of radius $e^{-(m+n)tL}.$ We are interested in bounding the number of balls of radius $e^{-(m+n)t(L+|J_{\ell+1}|)}$ needed to cover $Z(J) \cap D u_i$. If 
 $Z(J) \cap D u_i = \varnothing$ there is nothing to cover. 
 
 So let $u \in Z(J) \cap Du_i$. By definition of $Z(J)$ this implies
$\tilde\alpha(g_t^L ux) \leq M$; on the other hand
$\tilde\alpha(g_t^j ux)>M$ for all $j \in J _{\ell+1}$. Since $g _ t$ expands every vector in $\bigwedge  \R ^ {m+n}$ by at most $e^{C_3 t}$, with $C _ 3$ depending only on $m,n$,
it follows that
$$\tilde\alpha(g_t^L u_i x) \geq e^{-C_3 t}M.$$ Hence using \equ{Calpha} one gets
\begin{equation*}
C_{\tilde \alpha} M \geq \tilde \alpha (g _ t ^ L x) \geq C_{\tilde \alpha} ^{-1} e ^ {- C _ 3 t} M
.\end{equation*}
Assuming $M$ is large enough (depending on $t$) so that
Corollary~\ref{cor:coverN} is applicable to  $x'= g_t^{L} u_i x$, we see that $Z _ {x '} (M, |J_{\ell +1}|, t)$ can be covered by 
\[
C_{\tilde\alpha} C_1^{|J_{\ell +1}|} t^{|J_{\ell +1}|} e^{(m+n-1)mnt|J_{\ell +1}|} \leq t^{2|J_{\ell +1}|} e^{(m+n-1)mnt|J_{\ell +1}|}\]
 balls in $U$ of radius $e^{-(m+n)|J_{\ell +1}|}$ (assuming $t_0>C_{\tilde\alpha} C_1 $).

 Note that by definition of $Z(J)$ and $ Z_x({\cdot}, {\cdot}, {\cdot})$ one has
\[
Z (J) \cap D u _ i \subset g_t^{-L} Z_{x'}(M, |J_{\ell +1}|, t) g_t^{L}
.\]
Our bound on the number of $e^{-(m+n)|J_{\ell +1}|}$-balls needed to cover $Z_{x'}(M, |J_{\ell +1}|, t) $ implies that $g_t^{-L} Z_{x'}(M, |J_{\ell +1}|, t) g_t^{L}$ can be covered by at most \[t^{2|J_{\ell +1}|} e^{(m+n-1)mnt|J_{\ell +1}|}\] balls of radius \(e^{-(m+n)(L+|J_{\ell +1}|)}=e^{-(m+n)(L ')},\) hence $Z(J)$ can be covered by \[t^{2L'} e^{mnt[(m+n)L'-|J_1| - \cdots - |J_{\ell+1}|]}\]
balls of radius $e^{-(m+n)(L ')}$, establishing the inductive hypothesis.
\end{proof}

\end{document}